\newtheorem{Theorem}{Theorem}[section]
\newtheorem{Definition}[Theorem]{Definition}
\newtheorem{Lemma}[Theorem]{Lemma}
\newtheorem{Hypothesis}{Hypothesis}
\numberwithin{equation}{section}
\begin{document}
\renewcommand{\figurename}{Fig.1}

\def\le{\left}
\def\r{\right}
\def\cost{\mbox{const}}
\def\a{\alpha}
\def\d{\delta}
\def\ph{\varphi}
\def\e{\epsilon}
\def\la{\lambda}
\def\si{\sigma}
\def\La{\Lambda}
\def\B{{\cal B}}
\def\A{{\mathcal A}}
\def\L{{\mathcal L}}
\def\O{{\mathcal O}}
\def\bO{\overline{{\mathcal O}}}
\def\F{{\mathcal F}}
\def\K{{\mathcal K}}
\def\H{{\mathcal H}}
\def\D{{\mathcal D}}
\def\C{{\mathcal C}}
\def\M{{\mathcal M}}
\def\N{{\mathcal N}}
\def\G{{\mathcal G}}
\def\T{{\mathcal T}}
\def\R{{\mathcal R}}
\def\I{{\mathcal I}}

\def\bw{\overline{W}}
\def\phin{\|\varphi\|_{0}}
\def\s0t{\sup_{t \in [0,T]}}
\def\lt{\lim_{t\rightarrow 0}}
\def\iot{\int_{0}^{t}}
\def\ioi{\int_0^{+\infty}}
\def\ds{\displaystyle}
\def\pag{\vfill\eject}
\def\fine{\par\vfill\supereject\end}
\def\acapo{\hfill\break}

\def\beq{\begin{equation}}
\def\eeq{\end{equation}}
\def\barr{\begin{array}}
\def\earr{\end{array}}
\def\vs{\vspace{.1mm}   \\}
\def\rd{\reals\,^{d}}
\def\rn{\reals\,^{n}}
\def\rr{\reals\,^{r}}
\def\bD{\overline{{\mathcal D}}}
\newcommand{\dimo}{\hfill \break {\bf Proof - }}
\newcommand{\nat}{\mathbb N}
\newcommand{\E}{\mathbb E}
\newcommand{\Pro}{\mathbb P}
\newcommand{\com}{{\scriptstyle \circ}}
\newcommand{\reals}{\mathbb R}

\title{Large deviations for the Langevin equation with strong damping\thanks{{\em Key words}: Large deviations, Laplace principle, over damped stochastic differential equations}}

\author{Sandra Cerrai\thanks{Partially supported by the NSF grant DMS 1407615.},\  \ Mark Freidlin\thanks{Partially supported by the NSF grant DMS 1411866.}\\
\\
Department of Mathematics\\
University of Maryland\\
College Park, MD 20742\\
USA}
\date{}

\maketitle

\begin{abstract} 
We study large deviations in the Langevin dynamics, with damping of order $\e^{-1}$ and noise of order $1$, as $\e\downarrow 0$. The damping coefficient is assumed to be state dependent. We proceed first with a change of time and then, we use a weak convergence approach to large deviations and their equivalent formulation in terms of the Laplace principle, to determine the good action functional.

Some applications of these results to the exit problem from a domain and to the wave front propagation for a suitable class of reaction diffusion equations are considered.

\end{abstract}

\section{Introduction}
\label{sec:1}
For every $\e>0$, let us consider the Langevin equation
\begin{equation}
\label{eq1}
\le\{\begin{array}{l}
\ds{\ddot{q}^{\,\e}(t)=b(q^\e(t))-\frac {\a(q^\e(t))}\e\dot{q}^{\,\e}(t)+\si(q^\e(t))\dot{B}(t),}\\
\vs
\ds{q^\e(0)=q \in\,\reals^{d},\ \ \ \ \dot{q}^\e(0)=p \in\,\reals^{d}.}
\end{array}
\r.
\end{equation}
Here $B(t)$ is a $r$-dimensional standard Wiener process, defined on some complete stochastic basis $(\Omega,\mathcal{F},\{\mathcal{F}_t\},\mathbb{P})$.  In what follows, we shall assume that  $b$ is Lipschitz continuous and  $\a$ and $\si$ are bounded and continuously differentiable, with bounded derivative. Moreover,  $\si$ is invertible and there exist two constants  $0<\a_0<\a_1$ such that $a_0\leq \a(q)\leq \a_1$, for all $q \in\,\reals^d$.
Equation \eqref{eq1} can be rewritten as the following system in $\reals^{2d}$
\[\le\{\begin{array}{ll}
\ds{\dot{q^\e}(t)=p^\e(t),}  &  \ds{q^\e(0)=q \in\,\reals^d,}\\
\vs
\ds{\dot{p^\e}(t)=b(q^\e(t))-\frac {\a(q^\e(t))}\e\dot{q}^{\e}(t)+\si(q^\e(t))\dot{B}(t),}  &  \ds{p^\e(0)=p\in\,\reals^d,}
\end{array}\r.\]
and, due to our assumptions on the coefficients, for any $\e>0$, $T>0$ and $k\geq 1$, the system above admits a unique solution $z^\e=(q^\e,p^\e) \in\,L^k(\Omega,C([0,T];\reals^{2d}))$, which is a Markov process. 

\medskip

Now, if we do a change of time and  define
$q_\e(t):=q^\e(t/\e)$, $t\geq 0$,
we have 
\begin{equation}
\label{eq2intro}
\le\{\begin{array}{l}
\ds{\e^2 \ddot{q_\e}(t)=b(q_\e(t))-\a(q_\e(t))\dot{q}_{\e}(t)+\sqrt{\e}\,\si(q_\e(t))\dot{w}(t),}\\
\vs
\ds{q_\e(0)=q \in\,\reals^d,\ \ \ \ \dot{q_\e}(0)=\frac p\e \in\,\reals^d,}
\end{array}\r.\end{equation}
where $w(t)=\sqrt{\e}B(t/\e)$, $t\geq0$, is another $\reals^r$-valued Wiener process, defined on the same stochastic basis $(\Omega,\mathcal{F},\{\mathcal{F}_t\},\mathbb{P})$.

\bigskip

In the present paper, we are interested in studying the large deviation principle for equation \eqref{eq2intro}, as $\e\downarrow 0$. Namely, we want to prove that the family $\{q_\e\}_{\e>0}$ satisfies a large deviation principle in $C([0,T];H)$, with the same action functional $I$ and the same normalizing factor $\e$ that describe the large deviation principle for the first order equation
\begin{equation}
\label{cf450}
\dot{g_\e}(t)=\frac{b(g_\e(t))}{\a(g_\e(t))}+\sqrt{\e}\,\frac{\si(g_\e(t))}{\a(g_\e(t))}\dot{w}(t),\ \ \ \ g_\e(0)=q \in\,\reals^d.\end{equation}

In particular, as shown in Section \ref{sec4}, this implies that the asymptotic behavior of the exit time from a basin of attraction for the over damped Langevin dynamics\eqref{eq1} can be described by the quasi potential $V$ associated with $I$, as well as the asymptotic behavior of the solutions of the degenerate parabolic and elliptic problems associated with the  Langevin dynamics.

Moreover, in Section \ref{sec4}, we will show how these results allow to prove that in reaction-diffusion equations with non-linearities of KPP type, where the transport is described by the Langevin dynamics itself, the interface separating
 the areas where $u^\e$ is close to $1$ and to $0$, as $\e\downarrow 0$, is given in terms of the action functional $I$, as in the classical case, when the vanishing mass approximation is considered.

 \bigskip
 
 In \cite{friedlin} and \cite{cf}, the system 
 \begin{equation}
 \label{cf-intro}
 \le\{\begin{array}{l}
\ds{\mu \ddot{q_{\mu,\e}}(t)=b(q_{\mu,\e}(t))-\a(q_{\mu,\e}(t))\dot{q}_{\mu,\e}(t)+\sqrt{\e}\,\si(q_{\mu,\e}(t))\dot{w}(t),}\\
\vs
\ds{q_{\mu,\e}(0)=q \in\,\reals^d,\ \ \ \ \dot{q_{\mu,\e}}(0)=\frac p\e \in\,\reals^d,}
\end{array}\r.\end{equation}
for $0<\mu, \e<<1$, has been studied, under the crucial assumption that the friction coefficient $\a$ is  independent of $q$.

It has been proven that, in this case, the so-called Kramers-Smoluchowski approximation holds, that is for any fixed $\e>0$ the solution $q_{\mu,\e}$ of system \eqref{cf-intro} converges  in $L^2(\Omega;C([0,T];\reals^d))$, as $\mu\downarrow 0$, to $g_\e$, the solution of the first order equation \eqref{cf450}.
Moreover, it has been proven that, if $V_\mu(q,p)$ is the quasi-potential associated with the family $\{q_{\mu,\e}\}_{\e>0}$, for $\mu>0$ fixed, then
\[\lim_{\mu\to 0}\,\inf_{p \in\,\reals^d}V_\mu(q,p)=V(q),\]
where $V$ is the quasi-potential associated with the action-functional $I$.

In \cite{fh},  equation \eqref{cf-intro} with  non constant friction $\a$ has been considered and it has been shown that in this case the situation is considerably more delicate. Actually, the limit of $q_{\mu,\e}$ to $g_\e$ has only been proven via a previous regularization of the noise, which has led to the convergence of $q_{\mu,\e}$ to the solution $\tilde{g}_\e$ of the first order equation with Stratonovich integral.

Finally, we would like to mention that in the recent  paper \cite{lyv}, by Lyv and Roberts,  an analogous problem has been studied for the stochastic damped wave equation in a bounded regular domain $D\subset \reals^d$, with $d=1,2,3$,
\[\le\{\begin{array}{l}
\ds{\e\,\frac{\partial^2 u(t,x)}{\partial t^2}=\Delta u(t,x)+f(u(t,x))-\frac{\partial u(t,x)}{\partial t}+\e^\a\,\frac{\partial w(t,x)}{\partial t}}\\
\vs
\ds{u(t,x)=0,\ \ \ x \in\,\partial D,\ \ \ u(0,x)=u_0(x),\ \ \ \ \frac{\partial u(0,x)}{\partial t}=v_0,(x)}
\end{array}\r.\]
where $\e>0$ is a small parameter, the friction coefficient  is constant ($\a=1$), $w(t,x)$ is a  smooth cylindrical Wiener process and $f$ is a cubic non-linearity. 
By using the weak convergence approach, the authors show that the family $\{u_\e\}_{\e>0}$ satisfies a large deviation principle in $C([0,T];L^2(D))$, with normalizing factor $\e^{2\a}$ and  the same action functional that describes the large deviation principle for the stochastic parabolic equation.

\medskip

As mentioned above, in the present paper we are  dealing with the case of non-constant friction $\a$ and $\mu=\e^2$. Dealing with a non-constant friction coefficient turns out to be important in applications, as it allows to describes new effects in reaction-diffusion equations and exit problems (see section \ref{sec4}). Here, we will study the large deviation principle for equation\eqref{eq2intro} by using the approach of weak convergence (see \cite{de} and \cite{db}) and we will show the validity of the Laplace principle, which, together with the compactness of level sets, is equivalent to the large deviation principle. 

At this point, it is worth mentioning  that one major difficulty here is handling  the integral
\[\int_0^t \exp\le(-\int_s^t\a(q_\e(r))\,dr\r)\si(q_\e(s))\,dw(s),\]
and proving that it converges to zero, as $\e\downarrow 0$, in $L^1(\Omega;C([0,T];\reals^d))$.
Actually, as $\a$ is non-constant, the integral above cannot be interpreted as an It\^o's integral and in our estimates we cannot use It\^o's isometry.
Nevertheless, due to the regularity of $q_\e(t)$, we can consider the integral above as a pathwise integral, and with appropriate integrations by parts, we can get the estimates required to prove the Laplace principle.

\section{The problem and the method}

We are dealing here with the equation
\begin{equation}
\label{eq2}
\le\{\begin{array}{l}
\ds{\e^2 \ddot{q_\e}(t)=b(q_\e(t))-\a(q_\e(t))\dot{q}_{\e}(t)+\sqrt{\e}\,\si(q_\e(t))\dot{w}(t),}\\
\vs
\ds{q_\e(0)=q \in\,\reals^d,\ \ \ \ \dot{q_\e}(0)=\frac p\e \in\,\reals^d,}
\end{array}\r.\end{equation}
Here $w(t)$, $t \geq 0$,  is a $r$-dimensional Brownian motion and the coefficients $b$, $\si$ and $\a$ satisfy the following conditions.

\begin{Hypothesis}
\label{H1}
\begin{enumerate}
\item
The mapping $b:\reals^d\to\reals^d$ is Lipschitz-continuous and the mapping $\si:\reals^d\to {\mathcal L}(\reals^r,\reals^d)$ is continuously differentiable and    bounded, together with its derivative. Moreover, the matrix $\si(q)$ is invertible, for any $q \in\,\reals^d$, and $\si^{-1}:\reals^d\to{\cal L}(\reals^r,\reals^d)$ is bounded. 
\item The mapping $\a:\reals^d\to\reals$ belongs to $C^1_b(\reals^d)$ and 
\begin{equation}
\label{cf11}
\inf_{x \in\,\reals^d}\a(x)=:\a_0>0.
\end{equation}
\end{enumerate}
\end{Hypothesis}
In view of the conditions on the coefficients $\a$, $b$ and $\si$ assumed in Hypothesis \ref{H1}, for every fixed $\e>0$, equation \eqref{eq2} admits a unique solution $z_\e=(q_\e,p_\e) \in\,L^k(0,T;\reals^d)$, with $T>0$ and $k \geq 1$.

Now, for any predictable process $u$ taking values in  $L^2([0,T];\reals^r)$, we introduce the problem 
\begin{equation}
\label{eq3bis}
\dot{g}^{\,u}(t)=\frac{b(g^u(t))}{\a(g^u(t))}+\frac{\si(g^u(t))}{\a(g^u(t))}u(t),\ \ \ \ g^u(0)=q \in\,\reals^d.
\end{equation}
The existence and uniqueness of a pathwise solution $g^u$ to problem \eqref{eq3bis} in $C([0,T];\reals^d)$ is an immediate consequence of the  conditions on the coefficients $b$, $\si$ and $\a$ that we have assumed in Hypothesis \ref{H1}.

In what follows, we shall denote by ${\cal G}$ the mapping
\[{\cal G}:L^2([0,T];\reals^r)\to C([0,T];\reals^d),\ \ \ \ u\mapsto {\cal G}(u)=g^u.\]
 Moreover, for any $f \in\,C([0,T];\reals^d)$ we shall define
 \[I(f)=\frac 12\inf\le\{\,\int_0^T|u(t)|^2\,dt\,:\ f= {\cal G}(u),\ u \in\,L^2([0,T];\reals^r)\,\r\},\]
 with the usual convention $\inf \emptyset=+\infty$. This means that 
 \begin{equation}
 \label{cf41}
 I(f)=\frac 12\int_0^T\le|\a(f(s))\si^{-1}(f(s))\le(\dot{f}(s)-\frac{b(f(s))}{\a(f(s))}\r)\r|^2\,ds,\end{equation}
 for all $f \in\,W^{1,2}(0,T;\reals^d)$.
 
 If we denote by $g_\e$ the solution of the stochastic equation
 \begin{equation}
\label{eq3bis-stoc}
\dot{g_\e}(t)=\frac{b(g_\e(t))}{\a(g_\e(t))}+\sqrt{\e}\,\frac{\si(g_\e(t))}{\a(g_\e(t))}\dot{w}(t),\ \ \ \ g_\e(0)=q \in\,\reals^d,
\end{equation}
we have that $I$ is the large deviation action functional for the family $\{g_\e\}_{\e>0}$ in the space of continuous trajectories $C([0,T];\reals^d)$ (for a proof see e.g. \cite{fw}). This means that the level sets $\{I(f)\leq c\}$ are compact in $C([0,T];\reals^d)$, for any $c>0$, and for any closed subset $F\subset C([0,T];\reals^d)$ and any open set $G \subset C([0,T];\reals^d)$ it holds
\[\begin{array}{l}
\ds{\limsup_{\e\to 0^+}\e\log \mathbb{P}(g_\e \in\,F)\leq -I(F),}\\
\vs
\ds{\liminf_{\e\to 0^+}\e\log \mathbb{P}(g_\e \in\,G)\geq -I(G),}
\end{array}\]
where,  for any subset $A\subset C([0,T];\reals^d)$, we have denoted  
\[I(A)=\inf_{f \in\,A}I(f).\]

The main result of the present paper is to prove that in fact the family of solutions $q_\e$ of equation \eqref{eq2intro} satisfies a large deviation principle with the same action functional $I$ that describes the large deviation principle for  the family of solutions $g_\e$ of equation \eqref{eq3bis-stoc}. And, due to the fact that $q^\e(t)=q_\e(\e t)$, $t\geq 0$, this allows to describe the  behavior of the over damped Langevin dynamics \eqref{eq1} (see Section \ref{sec4} for all details).

\begin{Theorem}
\label{teo1}
Under Hypothesis \ref{H1}, the family of probability measures $\{\mathcal{L}(q_\e)\}_{\e>0}$, in the space of continuous paths $C([0,T];\reals^d)$, satisfies a large deviation principle with action functional $I$.
\end{Theorem}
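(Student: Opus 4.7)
The plan is to follow the weak-convergence / Laplace principle strategy of Budhiraja--Dupuis--Ellis, as advertised in the Introduction. Compactness of the level sets $\{I\le c\}$ is automatic: the variational formula $I(f)=\tfrac12\inf\{\|u\|_{L^2}^2:f=\mathcal G(u)\}$ exhibits $\{I\le c\}$ as $\mathcal G$ applied to the weakly compact $L^2$-ball of radius $\sqrt{2c}$, and $\mathcal G$ is continuous on such balls (this is the same argument that yields the LDP for $g_\e$ cited from \cite{fw}). By the equivalence of LDP with a good rate function and the Laplace principle, it suffices to prove, for every bounded continuous $h:C([0,T];\reals^d)\to\reals$,
\[
\lim_{\e\to 0}\e\log\E\exp\!\le(-\frac{h(q_\e)}{\e}\r)=-\inf_{f}\{h(f)+I(f)\}.
\]

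By the Budhiraja--Dupuis variational representation applied to $w$, after rescaling the control by $\sqrt\e$, the left-hand side equals $-\inf_{u\in\A}\E\le[\tfrac12\int_0^T|u(s)|^2\,ds+h(q_\e^u)\r]$, with $\A$ the class of $\reals^r$-valued predictable processes of finite $L^2$-moment and $q_\e^u$ solving the controlled equation
\[
\e^2\ddot q_\e^u=b(q_\e^u)-\a(q_\e^u)\dot q_\e^u+\si(q_\e^u)u+\sqrt\e\,\si(q_\e^u)\dot w,\quad q_\e^u(0)=q,\ \dot q_\e^u(0)=p/\e.
\]
A standard two-sided argument from \cite{db,de} then reduces the Laplace principle to proving: whenever $\{u_\e\}\subset\A$ satisfies $\sup_\e\E\int_0^T|u_\e|^2ds\le N<\infty$, one has $q_\e^{u_\e}-g^{u_\e}\to 0$ in probability in $C([0,T];\reals^d)$, where $g^{u_\e}=\mathcal G(u_\e)$ solves the ODE \eqref{eq3bis} with control $u_\e$. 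Indeed, along any subsequence with $u_\e\rightharpoonup u$ weakly in $L^2$, continuity of $\mathcal G$ gives $g^{u_\e}\to\mathcal G(u)$, and the upper and lower Laplace inequalities follow by Skorokhod representation.

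To establish this approximation, I would solve the linear-in-$p$ momentum equation for $p_\e^{u_\e}:=\dot q_\e^{u_\e}$ by variation of constants, obtaining
\[
p_\e^{u_\e}(t)=\frac{1}{\e^2}\!\int_0^t\!e^{-\frac{1}{\e^2}\int_s^t\a(q_\e^{u_\e}(r))dr}\bigl[b(q_\e^{u_\e}(s))+\si(q_\e^{u_\e}(s))u_\e(s)\bigr]ds+\frac{p}{\e}e^{-\frac{1}{\e^2}\int_0^t\a(q_\e^{u_\e}(r))dr}+J_\e(t),
\]
where
\[
J_\e(t):=\frac{1}{\e^{3/2}}\int_0^t e^{-\frac{1}{\e^2}\int_s^t\a(q_\e^{u_\e}(r))dr}\si(q_\e^{u_\e}(s))\,dw(s).
\]
Since $\a\ge\a_0>0$, the kernel $\e^{-2}\a(q_\e^{u_\e}(s))\exp(-\e^{-2}\!\int_s^t\a\,dr)$ is a good approximate identity in $s$, so integrating the drift term in $t$ produces, up to $o(1)$, the slow-system drift $b(q_\e^{u_\e})/\a(q_\e^{u_\e})+\si(q_\e^{u_\e})u_\e/\a(q_\e^{u_\e})$, while the transient $\e^{-1}|p|e^{-\a_0 t/\e^2}$ contributes $O(\e)$ after integration in $t$. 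A Gronwall estimate using the Lipschitz continuity of $b/\a$ and $\si/\a$ then reduces the approximation to the single estimate $\E\sup_{t\le T}\le|\int_0^t J_\e(s)\,ds\r|\to 0$.

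The main obstacle, as the authors emphasize, is the analysis of $J_\e$: because $\int_s^t\a(q_\e^{u_\e}(r))dr$ depends on $r\ge s$, the integrand is not $\F_s$-adapted and It\^o's isometry is unavailable. I would instead treat $J_\e$ as a pathwise Riemann--Stieltjes integral and integrate by parts in $s$ against $w$:
\[
J_\e(t)=\frac{1}{\e^{3/2}}\si(q_\e^{u_\e}(t))w(t)-\frac{1}{\e^{3/2}}\int_0^t w(s)\,\partial_s\!\le[e^{-\e^{-2}\int_s^t\a(q_\e^{u_\e}(r))dr}\si(q_\e^{u_\e}(s))\r]ds.
\]
Differentiating the kernel in $s$ brings down a factor $\a(q_\e^{u_\e}(s))/\e^2$, plus a regular contribution $\si'(q_\e^{u_\e}(s))\,p_\e^{u_\e}(s)$. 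Combining the lower bound $\a\ge\a_0$, Young-type inequalities, and a priori energy estimates for $\sqrt\e\,p_\e^{u_\e}$ derived from the controlled version of \eqref{eq2} (which depend only on $\sup_\e\E\int_0^T|u_\e|^2ds\le N$ and the boundedness of $\si,\si',\a$), together with a further integration in $t$, the two terms on the right can be estimated in $L^1(\Omega;C([0,T]))$ and one obtains $\E\sup_{t\le T}|\!\int_0^t J_\e(s)ds|\to 0$. This closes the Gronwall bound and completes the proof of Theorem \ref{teo1}.
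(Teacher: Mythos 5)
Your overall architecture matches the paper's exactly: reduce the LDP to the Laplace principle via the Budhiraja--Dupuis variational representation, use variation of constants for the momentum equation, exploit the approximate-identity structure of the kernel $\e^{-2}\a\,e^{-\e^{-2}\int_s^t\a}$ to extract the slow drift, run a Gronwall bound, and treat the non-adapted stochastic convolution $J_\e$ by pathwise Riemann--Stieltjes integration by parts against $w$. The identification of $J_\e$ as the genuine obstacle, and the idea to integrate by parts in $s$ because It\^o's isometry is unavailable, are both correct and are exactly the paper's insight.

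The gap is in the \emph{form} of the integration by parts and the sharpness of the resulting bound. Writing $H_\e:=\e^2 J_\e$, your formula $\int_0^t h(s)\,dw(s)=h(t)w(t)-\int_0^t w(s)h'(s)\,ds$ yields, after attaching $\sqrt\e\,e^{-A_\e(t)}$, two terms each bounded by $C\sqrt\e\,\sup_{s\le T}|w(s)|$ (the kernel $\e^{-2}e^{-\a_0(t-s)/\e^2}$ has unit mass, and the $\si' p_\e$-contribution is controlled by an energy estimate). This gives only $\E\sup_{t\le T}|H_\e(t)|=O(\sqrt\e)$. That estimate suffices for the boundary term $-H_\e(t)/\a(q_\e^{u_\e}(t))$ in the remainder, but it is \emph{not} enough to close the cross-terms involving $H_\e$ together with $p_\e^{u_\e}$ and $q_\e^{u_\e}$ (the paper's $I^4_\e$, $I^6_\e$). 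Those require the sharper pointwise-in-$t$ bound $\E|H_\e(t)|^k\le C\e^{3k/2}$ (plus an exponentially small transient), which feeds back into $\E\sup_t|q_\e^{u_\e}|^k=O(\e^{-k/2})$ and $\E\int_0^T|p_\e^{u_\e}|^2\,ds=O(\e^{-1})$. With only the $O(\sqrt\e)$ input one gets, for instance, $\E\int_0^T|H_\e|^2\,ds=O(\e)$ against $\E\int_0^T|p_\e^{u_\e}|^2\,ds=O(\e^{-1})$, so the $I^6_\e$-type estimate lands at $O(1)$ rather than $o(1)$, and the argument does not close. The paper obtains the $\e^{3k/2}$ bound by using the algebraically equivalent form of the integration by parts, $\int_0^t h(s)\,dw(s)=\int_0^t\bigl(w(t)-w(s)\bigr)h'(s)\,ds+w(t)h(0)$ (its formula \eqref{cf101} and display \eqref{cf300}); since $|w(t)-w(s)|\sim\sqrt{t-s}$ and the kernel localizes to $t-s\sim\e^2$, the Brownian increment supplies an extra factor $\e$, and the estimate then self-improves through the $q_\e$, $p_\e$ bounds. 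Your two terms do in fact nearly cancel to produce the same effect, but you treat them as individually estimable without exhibiting the cancellation, and the resulting order of magnitude is short by a factor $\e$ precisely where it matters.
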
 
In order to prove Theorem \ref{teo1}, we follow the weak convergence approach, as developed in \cite{de}, (see also \cite{db}). To this purpose, we need to introduce some notations.
We  denote by $\mathcal{P}_T$ the set of predictable processes in $L^2(\Omega\times[0,T];\reals^r)$, and for any $T>0$ and $\gamma>0$, we define the sets
\[\begin{array}{l}
\ds{\mathcal{S}^\gamma_T=\le\{ f \in\,L^2(0,T;\reals^d)\,:\,\int_0^T|f(s)|^2\,ds\leq \gamma \r\}}\\
\vs
\ds{\mathcal{A}^\gamma_T=\le\{ u \in\,\mathcal{P}_T\,:\,u \in\,\mathcal{S}^\gamma_T,\ \mathbb{P}-\text{a.s.} \r\}.}
\end{array}\]

Next, for any predictable process $u$ taking values in $L^2([0,T];\reals^r)$, we denote by $q^u_\e(t)$ the solution of the problem
\begin{equation}
\label{eq2}
\le\{\begin{array}{l}
\ds{\e^2 \ddot{q}^{\,u}_\e(t)=b(q^u_\e(t))-\a(q^u_\e(t))\dot{q}^{\,u}_{\e}(t)+\sqrt{\e}\,\si(q^u_\e(t))\dot{w}(t)+\si(q^u_\e(t))u(t),}\\
\vs
\ds{q^u_\e(0)=q \in\,\reals^d,\ \ \ \ \dot{q}^{\,u}_\e(0)=\frac p\e \in\,\reals^d.}
\end{array}
\r.
\end{equation}
As well known, for any fixed $\e>0$ and for any $T>0$ and $k \geq 1$, this equation admits a unique solution $q^u_\e$ in $L^k(\Omega;C([0,T];\reals^d))$.

By proceeding as in the proof of \cite[Theorem 4.3]{db}, the following result can be proven.
\begin{Theorem}
\label{teo-bd}
Let $\{u_\e\}_{\e>0}$ be a family of processes in ${\mathcal S}^\gamma_T$ that converge in distribution, as $\e\downarrow 0$, to some $u \in\,{\mathcal S}^\gamma_T$, as random variables taking values in the space $L^2(0,T;\reals^d)$, endowed with the weak topology. 

If the sequence $\{q^{u_\e}_\e\}_{\e>0}$ converges in distribution to $g^u$, as $\e\downarrow 0$, in the space of continuous paths $C([0,T];\reals^d)$, then the family $\{{\mathcal L}(q_\e)\}_{\e>0}$ satisfies a large deviation principle in $C([0,T];\reals^d)$, with action functional $I$.
\end{Theorem}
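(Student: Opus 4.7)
The plan is to apply the weak-convergence approach to large deviations due to Budhiraja and Dupuis, which is based on the variational (Bou\'e-Dupuis) representation for exponential functionals of Brownian motion and reduces the LDP, through its equivalence with the Laplace principle, to two ingredients: (i) compactness of the level sets of $I$, and (ii) a distributional-convergence property for the controlled equations, which is precisely the hypothesis of the theorem. The proof therefore amounts to verifying (i) and then executing the Bou\'e-Dupuis scheme with (ii) as input.

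For (i), one observes that $\{I\leq c\}={\cal G}({\cal S}^{2c}_T)$ and that ${\cal S}^{2c}_T$ is compact and metrizable in the weak topology of $L^2(0,T;\reals^r)$, so it suffices to show that ${\cal G}$ is continuous from ${\cal S}^{2c}_T$ (with the weak topology) to $C([0,T];\reals^d)$. If $u_n\rightharpoonup u$ in $L^2$, then by boundedness of $b/\a$ and $\si/\a$ combined with Cauchy-Schwarz, the paths $\{g^{u_n}\}$ are equicontinuous and uniformly bounded; Arz\'ela-Ascoli yields uniform subsequential limits, and passing to the limit in the integral form of \eqref{eq3bis} (using the weak convergence of $u_n$ and the Lipschitz dependence of the coefficients on the trajectory) identifies any such limit with $g^u$.

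By the Bou\'e-Dupuis representation, for every bounded continuous $h:C([0,T];\reals^d)\to\reals$ one has
\[
-\e\log\E\le[\exp\!\le(-\frac{h(q_\e)}{\e}\r)\r] = \inf_{u\in{\cal P}_T}\E\le[\frac12\int_0^T|u(s)|^2\,ds + h(q^u_\e)\r],
\]
and the Laplace principle is the statement that this quantity converges to $\inf_f\{h(f)+I(f)\}$ as $\e\downarrow 0$. For the $\limsup$ direction, I would take $f^\eta$ nearly realizing $\inf_f\{h(f)+I(f)\}$, pick a deterministic $u^\eta$ with ${\cal G}(u^\eta)=f^\eta$ and $\tfrac12\|u^\eta\|^2_{L^2}\leq I(f^\eta)+\eta$, and plug the constant control $u_\e\equiv u^\eta$ into the representation; the hypothesis of the theorem gives $q^{u^\eta}_\e\to f^\eta$ in distribution, and boundedness of $h$ yields $\limsup\leq h(f^\eta)+I(f^\eta)+\eta\leq\inf_f\{h(f)+I(f)\}+2\eta$. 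For the $\liminf$ direction, I pick near-minimizing controls $u_\e\in{\cal P}_T$; boundedness of $h$ forces $\E\int_0^T|u_\e(s)|^2\,ds$ to stay bounded, and after a standard truncation one may assume $u_\e\in{\cal A}^\gamma_T$ for some fixed $\gamma$. Weak compactness of ${\cal S}^\gamma_T$ together with Prokhorov-Skorokhod then produces a distributional limit $u\in{\cal A}^\gamma_T$ along a subsequence, so by the hypothesis $q^{u_\e}_\e\to g^u$ in distribution. Weak lower semicontinuity of $\|\cdot\|^2_{L^2}$ combined with Fatou's lemma gives $\liminf_\e\E[\tfrac12\|u_\e\|^2_{L^2}+h(q^{u_\e}_\e)]\geq \E[\tfrac12\|u\|^2_{L^2}+h(g^u)]\geq\inf_f\{h(f)+I(f)\}$.

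The main obstacle is the distributional convergence $q^{u_\e}_\e\to g^u$ required by the hypothesis: because the friction coefficient $\a$ is state-dependent, the iterated stochastic integral flagged in the introduction cannot be controlled by It\^o isometry and requires pathwise integration-by-parts estimates, and this is the substantive work performed in the remaining sections of the paper. Once that estimate is in place, the scheme above proceeds routinely.
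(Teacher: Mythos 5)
Your proposal is correct and takes the same route as the paper: the paper's ``proof'' of Theorem~\ref{teo-bd} is a one-line citation to Bou\'e--Dupuis \cite[Theorem~4.3]{db} (and an appeal to compactness of the level sets of $I$), and you have simply unpacked that citation into the standard variational-representation scheme (level-set compactness via continuity of $\mathcal{G}$ on weak $L^2$ balls, Laplace upper bound by inserting a near-optimal deterministic control, Laplace lower bound via truncation, tightness/Skorokhod, and weak lower semicontinuity of the $L^2$ norm).
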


Actually, as shown in \cite{db}, the convergence of $q^{u_\e}_\e$ to $g^u$ implies the validity of the Laplace principle with rate functional $I$. This means that, for any continuous mapping $\Lambda:C([0,T];\reals^d)\to \reals$ it holds
\[\lim_{\e\to 0}-\e\log\E\,\exp\le(-\frac 1\e\,\Lambda(q_\e)\r)=\inf_{f \in\,C([0,T];\reals^d)}\le(\,\Lambda(f)+I(f)\,\r).\]
And, as the level sets of $I$ are compact, this is equivalent to say that $\{{\mathcal L}(q_\e)\}_{\e>0}$ satisfies a large deviation principle in $C([0,T];\reals^d)$, with action functional $I$.

\section{Proof of Theorem \ref{teo1}}
\label{sec:2}

As we have seen in the previous section, in order to prove Theorem \ref{teo1}, we have to show that  
if $\{u_\e\}_{\e>0}$ is a family of processes in ${\mathcal S}^\gamma_T$ that converge in distribution, as $\e\downarrow 0$, to some $u \in\,{\mathcal S}^\gamma_T$, as random variables taking values in the space $L^2(0,T;\reals^d)$, endowed with the weak topology, then
the sequence $\{q^{u_\e}_\e\}_{\e>0}$ converges in distribution to $g^u$, as $\e\downarrow 0$, in the space $C([0,T];\reals^d)$.

In view of the Skorohod representation theorem, we can  rephrase such a condition in the following way. 
 On some probability space $(\bar{\Omega},\bar{{\mathcal F}},\bar{{\mathbb P}})$, consider a Brownian motion $\bar{w}_t$, $t\geq 0$,  along with the corresponding natural filtration $\{\bar{{\mathcal F}}_t\}_{t\geq 0}$. Moreover, consider a family of $\{\bar{{\mathcal F}}_t\}$-predictable processes $\{\bar{u}_\e,\ \bar{u}\}_{\e>0}$ in $L^2(\bar{\Omega}\times [0,T];\reals^d)$, taking values in ${\mathcal S}^\gamma_T$, $\bar{{\mathbb P}}$-a.s., such that the joint law of $(\bar{u}^\e, \bar{u},\bar{w})$, under $\bar{{\mathbb P}}$, coincides with the  joint law of $(u^\e, u,w)$, under ${\mathbb P}$, and such that
 \begin{equation}
 \label{cf70}
 \lim_{\e\to 0} \bar{u}_\e=\bar{u},\ \ \ \ \bar{{\mathbb P}}-\text{a.s.}\end{equation}
 as $L^2(0,T;\reals)$-valued random variables, endowed with the weak topology. Let $\bar{q}^{\,\bar{u}_\e}_\e$ be the solution of a problem analogous to  \eqref{eq2}, with $u$ and $w$ replaced respectively by $\bar{u}_\e$ and $\bar{w}$.

Then, we have to prove that 
\[\lim_{\e\to 0}\bar{q}^{\,\bar{u}_\e}_\e=g^{\bar{u}},\ \ \ \ \ \bar{{\mathbb P}}-\text{a.s.}\]
in $C([0,T];\reals^d)$.
In fact, we will prove more. Actually, we will show that 
\begin{equation}
\label{cf75}
\lim_{\e\to 0}\,\bar{\E}\sup_{t \in\,[0,T]}|\bar{q}^{\bar{u}_\e}_\e(t)-g^{\bar{u}}(t)|=0.\end{equation}

In order to prove \eqref{cf75}, we will need some preliminary  estimates. 
For any $\e>0$, we define
 the process 
\begin{equation}
\label{cf100}
H_\e(t)=\sqrt{\e}\,e^{-A_\e(t)}\int_0^t e^{A_\e(s)}\si(q^{u}_\e(s))\,dw(s),\ \ \ \ t\geq 0.
\end{equation}

\begin{Lemma}
Under Hypothesis \ref{H1}, for any $T>0$, $k\geq 1$ and $\gamma>0$, there exists $\e_0>0$ such that for any $u \in\,S^\gamma_T$ and $\e \in\,(0,\e_0]$  
\begin{equation}
\label{cf106}
\sup_{s\leq t}\E\,|H_\e(t)|^k\leq c_{k,\gamma}(T)(|q|^k+|p|^k+1)\e^{\frac {3k}2}+c_k\,\e^{\frac k2}\,t^{\frac k2}e^{-\frac{k\a_0 t}{\e^2}}.
\end{equation}
Moreover, we have
\begin{equation}
\label{cf121}
\E\sup_{t \in\,[0,T]}|H_\e(t)|\leq \sqrt{\e}\,c_\gamma(T)(1+|q|+|p|).
\end{equation}

\end{Lemma}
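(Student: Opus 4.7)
My plan is to avoid treating the stochastic integral inside $H_\e$ by It\^o isometry (which, as explained in the introduction, is awkward because the kernel $e^{-(A_\e(t)-A_\e(s))}$ depends on both endpoints) and instead convert it into a pathwise Riemann integral via integration by parts. The key enabling fact is that $q^u_\e$ is pathwise $C^1$ in $t$ with continuous derivative $p^u_\e:=\dot q^u_\e$, so the map $s\mapsto e^{A_\e(s)}\si(q^u_\e(s))$ is absolutely continuous and admits an ordinary derivative; pathwise integration by parts against $dw$ is then justified as a Riemann--Stieltjes identity (equivalently, It\^o integration by parts for a finite-variation integrand).

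Performing this integration by parts on $\int_0^t e^{A_\e(s)}\si(q^u_\e(s))\,dw(s)$, then using a second integration by parts to replace the boundary term $\si(q^u_\e(t))w(t)$ via the identity
\[
\si(q^u_\e(t))=e^{-A_\e(t)}\si(q)+\int_0^t e^{-(A_\e(t)-A_\e(s))}\Bigl[\tfrac{\a(q^u_\e(s))}{\e^2}\si(q^u_\e(s))+D\si(q^u_\e(s))\,p^u_\e(s)\Bigr]ds,
\]
I arrive at the closed-form representation
\[
\begin{aligned}
H_\e(t) = {} & \sqrt{\e}\,e^{-A_\e(t)}\si(q)\,w(t) \\
& {}+\sqrt{\e}\int_0^t e^{-(A_\e(t)-A_\e(s))}\Bigl[\tfrac{\a(q^u_\e(s))}{\e^2}\si(q^u_\e(s))+D\si(q^u_\e(s))\,p^u_\e(s)\Bigr]\bigl[w(t)-w(s)\bigr]ds.
\end{aligned}
\]
In parallel I would derive the standard energy bound $\E\sup_{s \in [0,T]}|\e\, p^u_\e(s)|^k \leq c_{k,\gamma}(T)(1+|q|^k+|p|^k)$ by applying the It\^o formula to a Lyapunov functional of $(q^u_\e,\e\, p^u_\e)$ and using $\inf \a \geq \a_0>0$, boundedness of $\si$, and $u\in\mathcal{S}^\gamma_T$; this is what provides the $(1+|q|^k+|p|^k)$ factor in \eqref{cf106}.

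With these ingredients, \eqref{cf106} follows term by term. The first summand is bounded pointwise by $\sqrt{\e}\|\si\|_\infty e^{-\a_0 t/\e^2}|w(t)|$, whose $k$-th moment is $\leq c_k\,\e^{k/2}t^{k/2}e^{-k\a_0 t/\e^2}$. For the integral summand I pass the $L^k(\Omega)$-norm under the Bochner integral by Minkowski, use $\|w(t)-w(s)\|_{L^k}\leq c_k\sqrt{t-s}$ and $e^{-(A_\e(t)-A_\e(s))}\leq e^{-\a_0(t-s)/\e^2}$, and split the bracketed factor: via the change of variables $\tau=(t-s)/\e^2$ the $\tfrac{\a}{\e^2}\si$ piece collapses to $c\,\e^{3/2}\int_0^\infty e^{-\a_0\tau}\sqrt{\tau}\,d\tau \leq c\,\e^{3/2}$, while the $D\si\,p^u_\e$ piece carries an extra $\|p^u_\e(s)\|_{L^k}\leq c(1+|q|+|p|)/\e$ from the energy estimate and yields $c\,\e^{5/2}(1+|q|+|p|)$, absorbed by the $\e^{3k/2}$ term. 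The supremum estimate \eqref{cf121} is obtained from the same decomposition by taking $\sup_t$ inside before $\E$ and using $|w(t)-w(s)|\leq 2\sup_{r\leq T}|w(r)|$ together with $\int_0^t(\a/\e^2)e^{-\a_0(t-s)/\e^2}\,ds\leq \a_1/\a_0$: the three pieces contribute $c\sqrt{\e}\,\E\sup|w|$, $c\sqrt{\e}\,\E\sup|w|$, and (via Cauchy--Schwarz and the energy bound) $c\,\e^{3/2}(1+|q|+|p|)\sqrt{T}$, summing to $c_\gamma(T)\sqrt{\e}(1+|q|+|p|)$.

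The main subtlety is making sure the $\sqrt{t-s}$ factor from the Brownian increment is retained when pulling the $L^k(\Omega)$-norm inside the integral: it is precisely this half-power that, combined with the $\e^2$-timescale from the exponential kernel, upgrades the naive $\sqrt{\e}$ scaling of $H_\e$ to the correct $\e^{3/2}$ rate. Using Minkowski (rather than Cauchy--Schwarz, which would produce only $t-s$ in an $L^2$-sense and cost an extra factor of $\e^{-1/2}$) is therefore essential at this step.
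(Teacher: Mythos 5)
Your pathwise integration by parts yields exactly the same closed-form representation of $H_\e(t)$ as the paper's formula \eqref{cf300} (your ``second integration by parts'' is equivalent to using the boundary term $h(0)g(t)$ rather than $h(t)g(t)$ in the first one), so the two arguments share the crucial step. Where you genuinely diverge is in how the estimates on $H_\e$ are then closed. The paper never establishes an a priori moment bound on $p^u_\e$ in terms of the data alone; instead it bounds $q^u_\e$ and $p^u_\e$ \emph{in terms of $H_\e$} (equations \eqref{cf103}, \eqref{cf104}), substitutes those bounds back into the representation of $H_\e$, and exploits that $|w(t)-w(t-\e^2 s)|$ is independent of $\bar{\mathcal F}_{t-\e^2 s}$ (hence of $|H_\e(t-\e^2 s)|$) to split the expectation, producing the self-referential inequality $\sup_{s\le t}\E|H_\e(s)|^k\le C\e^{3k/2}+\cdots+C\e^{3k/2}\sup_{s\le t}\E|H_\e(s)|^k$, which is then closed as a fixed point for $\e$ small. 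You instead run Minkowski's integral inequality with a deterministic bound on the kernel and then invoke an independently derived energy estimate $\sup_{s}\E|\e p^u_\e(s)|^{k}\lesssim 1+|q|^k+|p|^k$. This is a genuinely different route, and it does buy something: the bootstrap (and the Markov-increment trick) disappear, and $\e_0$ is simply whatever is needed for the energy estimate.

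Two things in your plan need more care. First, the energy bound on $\e p^u_\e$ is not quite ``standard'' here: the It\^o/Lyapunov computation for $|\e p|^2$ brings in $\E|b(q^u_\e(s))|^2\lesssim 1+\E|q^u_\e(s)|^2$, and $q^u_\e$ is recovered by integrating $p^u_\e$, so there is a $(q,p)$-circularity to break. It does close, but only because the dissipative kernel $e^{-\a_0(t-s)/\e^2}$ contributes an extra $\e^2$ that compensates the $\e^{-2}$ coming from $|q^u_\e(t)|^2\lesssim |q|^2+ t\,\e^{-2}\int_0^t |\e p^u_\e|^2\,ds$; after that, Gronwall on $[0,T]$ finishes. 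You should spell this Gronwall step out rather than calling it standard, and you should also note that for $k>2$ the $u$-term cannot be estimated via $\int_0^T|u|^k\,ds$ (only $\int_0^T|u|^2\,ds\le\gamma$ is available); instead estimate the $u$-contribution to $\e p^u_\e$ pathwise with Cauchy--Schwarz, which yields a deterministic bound $c\sqrt\gamma$. Second, a small correction: since $p^u_\e(s)$ and $w(t)-w(s)$ are not independent, the factorisation of $\|D\si(q^u_\e(s))p^u_\e(s)\,(w(t)-w(s))\|_{L^k}$ must go through H\"older at exponent $2k$ on each factor, so the energy bound you need is at level $L^{2k}$, not $L^k$; this changes nothing substantive but should be stated. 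With these additions your derivation of \eqref{cf106} and \eqref{cf121} is sound and rests on the same $\e^{3/2}$ mechanism as the paper, namely the $\sqrt{t-s}$ gain from the Brownian increment traded against the $\e^2$-timescale in the exponential kernel.
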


\begin{proof}
Equation \eqref{eq2} can be rewritten as the system
\[\le\{\begin{array}{l}
\ds{\dot{q}^{\,u}_\e(t)=p^{u}_\e(t),\ \ \ \ \ q^{u}_\e(0)=q}\\
 \vs
\ds{\e^2 \dot{p}^{\,u}_\e(t)=b(q^{u}_\e(t))-\a(q^{u}_\e(t))p^{\,{u}}_{\e}(t)+\sqrt{\e}\,\si(q^{u}_\e(t))\dot{w}(t)+\si(q^{u}_\e(t))u(t),\ \ \ \ \ p^{\,u}_\e(0)=\frac p\e.}
\end{array}
\r.
\]
Thus, if for any $0\leq s\leq t$ and $\e>0$ we define
\[A_{\e}(t,s):=\frac 1{\e^2}\int_s^t\a(q^{u}_\e(r))\,dr,\ \ \ \ A_{\e}(t):=A_{\e}(t,0),\]
we have
\begin{equation}
\label{cf13}
\begin{array}{ll}
\ds{p^{u}_\e(t)=}  &  \ds{\frac 1\e e^{-A_{\e}(t)}p+\frac 1{\e^2}\int_0^t e^{-A_{\e}(t,s)}b(q^{u}_\e(s))\,ds}\\
&  \vs
&\ds{+\frac 1{\e^2}\int_0^t e^{-A_{\e}(t,s)}\si(q^{u}_\e(s))\,u_\e(s)\,ds+\frac 1{\e^2} H_{\e}(t).}
\end{array}\end{equation}
Integrating with respect to $t$, this yields
\begin{equation}
\label{cf82}
\begin{array}{ll}
\ds{q^{u}_\e(t)=}  &   \ds{q+\frac 1\e\int_0^te^{-A_{\e}(s)}p\,ds+\frac 1{\e^2}\int_0^t \int_0^s e^{-A_{\e}(s,r)}b(q^{u}_\e(r))\,dr\,ds}\\
&  \vs
& \ds{+\frac 1{\e^2}\int_0^t \int_0^s e^{-A_{\e}(s,r)}\si(q^{u}_\e(r))\,u_\e(r)\,dr\,ds+\frac 1{\e^2}\int_0^t H_{\e}(s)\,ds.}
\end{array}
\end{equation}

Thanks to the Young inequality, this implies that for any $t \in\,[0,T]$
\[\begin{array}{l}
\ds{ |q^{u}_\e(t)|\leq |q|+\e\,|p|+c\int_0^t (1+|q^{u}_\e(s)|)\,ds+\int_0^t |u_\e(s)|\,ds+\frac 1{\e^2}\int_0^t |H_{\e}(s)|\,ds}\\
\vs
\ds{\leq  c_\gamma(T)(|q|+\e\,|p|+1) 
+\frac 1{\e^2}\int_0^t |H_{\e}(s)|\,ds+\int_0^t |q^{u}_\e(s)|\,ds,}\end{array}\]
and from the Gronwall lemma we can conclude that
\[|q^{u}_\e(t)|\leq c_\gamma(T)\le(1+|q|+|p|\r)+c(T)\frac 1{\e^2}\int_0^t |H_{\e}(s)|\,ds.\]
This implies that for any $k\geq 1$ 
 \begin{equation}
\label{cf103}
|q^{u}_\e(t)|^ k\leq c_{k,\gamma}(T)(|q|^k+|p|^k+1)+c_{k,\gamma}(T)\e^{-2 k}\int_0^t |H_{\e}(s)|^k\,ds,\ \ \ \ \e \in\,(0,1].
\end{equation}
Now, due to \eqref{cf13}, we have
\[\begin{array}{l}
\ds{|p^{u_\e}(t)|\leq \frac 1\e e^{-\frac{\a_0 t}{\e^2}}|p|+\frac 1{\e^2}\int_0^t e^{-\frac{\a_0(t-s)}{\e^2}}\le(1+|q^{u}_\e(s)|\r)\,ds}\\
\vs
\ds{+\frac 1{\e^2}\int_0^t e^{-\frac{\a_0(t-s)}{\e^2}}|u_\e(s)|\,ds+\frac1{\e^2}|H_\e(t)|,}
\end{array}\]
so that, thanks to \eqref{cf103}, for any $\e \in\,(0,1]$ we get
\begin{equation}
\label{cf104}
\begin{array}{l}
\ds{|p^{u}_{\e}(t)|\leq \frac 1\e e^{-\frac{\a_0 t}{\e^2}}|p|+c_{\gamma}(T)(|q|+|p|+1)+\frac 1{\e^2}\int_0^t e^{-\frac{\a_0(t-s)}{\e^2}}|u_\e(s)|\,ds+c(T)\frac1{\e^2}|H_\e(t)|.}
\end{array}\end{equation}

As well known, if $f \in\,C^1([0,t])$ and $g \in\,C([0,t])$,  then the Stiltjies integral
\[\int_0^t f(s)dg(s),\ \ \ \ t\geq 0,\]
is well defined and, if  $g(0)=0$, the following integration by parts formula holds
\begin{equation}
\label{cf101}
\int_0^t f(s)dg(s)=\int_0^t\le(g(t)-g(s)\r) h^\prime(s)\,ds+g(t)h(0),\ \ \ \ t\geq 0.
\end{equation}

Now, the mapping
\[[0,+\infty)\to {\mathcal L}(\reals^r,\reals^d),\ \ \ \ s\mapsto e^{A_{\e}(s)}\si(q^{u}_\e(s)),\]
is differentiable, $\mathbb{P}$-a.s.,  so that the stochastic integral in \eqref{cf100} is in fact a pathwise integral. In particular, we can apply formula \eqref{cf101}, with
\[h(s)=e^{A_\e(s)}\si(q^{u}_\e(s)),\ \ \ \ g(s)=w(s),\]
and we get
\begin{equation}
\label{cf300}
\begin{array}{ll}
\ds{H_\e(t)=}  &  \ds{\sqrt{\e}\int_0^t (w(t)-w(s))\,e^{-A_\e(t,s)}\le(\frac{\a(q^{u}_\e(s))}{\e^2}+\si^\prime(q^{u}_\e(s))p^{u}_\e(s)\r)\,ds}\\
& \vs
& \ds{+\sqrt{\e}w(t)e^{-A_\e(t)}\si(q).}
\end{array}\end{equation}
Thanks to \eqref{cf104}, this yields for any $\e \in\,(0,1]$
\[\begin{array}{l}
\ds{|H_\e(t)|\leq c\,\sqrt{\e}\int_0^t|w(t)-w(s)|\frac{e^{-\frac{\a_0(t-s)}{\e^2}}}{\e^2}\le(1+\e^2 |p^u_\e(s)|\r)\,ds+c\,\sqrt{\e}\,|w(t)|e^{-\frac{\a_0 t}{\e^2}}}\\
\vs
\ds{\leq c_\gamma(T)(|q|+|p|+1)\sqrt{\e}\int_0^{\frac{t}{\e^2}}|w(t)-w(t-\e^2 s)|e^{-\a_0 s}\,ds}\\
\vs
\ds{+\sqrt{\e}\,c_\gamma(T)\int_0^{\frac{t}{\e^2}}|w(t)-w(t-\e^2 s)|e^{-\a_0 s}|H_\e(t-\e^2 s)|\,ds+c\,\sqrt{\e}\,|w(t)|e^{-\frac{\a_0 t}{\e^2}},}
\end{array}\]
and hence, for any $k \geq 1$, we have
\[\begin{array}{l}
\ds{|H_\e(t)|^k\leq c_{k,\gamma}(T)(|q|^k+|p|^k+1)\e^{\frac {k}2}\,\int_0^{\frac{t}{\e^2}}|w(t)-w(t-\e^2 s)|^ke^{-\a_0 s}\,ds}\\
\vs
\ds{+\e^{\frac k2}\,c_{k,\gamma}(T)\int_0^{\frac{t}{\e^2}}|w(t)-w(t-\e^2 s)|^ke^{-\a_0 s}|H_\e(t-\e^2 s)|^k\,ds+c_k\,\e^{\frac k2}\,|w(t)|^ke^{-\frac{k\a_0 t}{\e^2}}.}
\end{array}\]
By taking the expectation, due to the independence of $|w(t)-w(t-\e^2 s)|$ with $|H_\e(t-\e^2 s)|$ and $\int_0^{t-\e^2s}|H_\e(r)|^k\,dr$, this implies that for any $\e \in\,(0,1]$
\[\begin{array}{l}
\ds{\E\,|H_\e(t)|^k\leq c_{k,\gamma}(T)(|q|^k+|p|^k+1)\e^{\frac {3k}2}\int_0^{\frac{t}{\e^2}} s^{\frac k2}e^{-\a_0 s}\,ds}\\
\vs
\ds{+\e^{\frac {3k}2}\,c_{k,\gamma}(T)\int_0^{\frac{t}{\e^2}}s^{\frac k2}e^{-\a_0 s}\E|H_\e(t-\e^2 s)|^k\,ds+c_k\,\e^{\frac k2}\,t^{\frac k2}e^{-\frac{k\a_0 t}{\e^2}}}\\
\vs
\ds{\leq c_{k,\gamma}(T)(|q|^k+|p|^k+1)\e^{\frac {3k}2}+c_k\,\e^{\frac k2}\,t^{\frac k2}e^{-\frac{k\a_0 t}{\e^2}}+\e^{\frac {3k}2}c_{k,\gamma}(T)\sup_{s\leq t}\E\,|H_\e(s)|^k.}
\end{array}\]
Therefore, if we pick $\e_0 \in\,(0,1]$ such that
\[\e^{\frac {3k}2}c_{k,\gamma}(T)<\frac 12,\]
we get \eqref{cf106}.

Now, let us prove \eqref{cf121}. From \eqref{cf300}, we have
\[\begin{array}{l}
\ds{|H_\e(t)|\leq \sqrt{\e}\,c\,\sup_{t \in\,[0,T]}|w(t)|
\le(1+\int_0^t e^{-\frac{\a_0(t-2)}{\e^2}}|p^{u_\e}_\e(s)|\,ds\r)}\\
\vs
\ds{\leq \sqrt{\e}\,c\,\sup_{t \in\,[0,T]}|w(t)|
\le(1+\e \le(\int_0^t |p^{u_\e}_\e(s)|^2\,ds\r)^{\frac 12}\r),}
\end{array}\]
and hence
\[\E\sup_{t \in\,[0,T]}|H_\e(t)|\leq  \sqrt{\e}\,c(T)\le(1+\e \le(\E\int_0^t |p^{u_\e}_\e(s)|^2\,ds\r)^{\frac 12}\r).\]
Thanks to \eqref{cf104}, as a consequence of the Young inequality, we get
\begin{equation}
\label{cf122}
\int_0^t |p^{u_\e}_\e(s)|^2\,ds\leq c_\gamma(T)(1+|q|^2+|p|^2)+\frac 1{\e^{4}}\,c(T)\int_0^t|H_\e(s)|^2\,ds,\end{equation}
so that
\[\E\sup_{t \in\,[0,T]}|H_\e(t)|\leq  \sqrt{\e}\,c_\gamma(T)(1+|q|+|p|)+\frac 1{\sqrt{\e}}\,c(T)\le(\int_0^t\E\,|H_\e(s)|^2\,ds\r)^{\frac 12}.\]
Therefore, \eqref{cf121} follows from \eqref{cf106}.
\end{proof}

\begin{Lemma}
Under Hypothesis \ref{H1}, for any $T>0$, $k\geq 1$ and $\gamma>0$ there exists $\e_0>0$ such that for any $u \in\,S^\gamma_T$ and $\e \in\,(0,\e_0)$ we have
\begin{equation}
\label{cf110}
\E\,\sup_{t \in\,[0,T]}|q^{u}_\e(t)|^ k\leq c_{k,\gamma}(T)(|q|^k+|p|^k+1)\,\e^{-\frac k2}+c_{k,\gamma}(T)\e^{2 -\frac{3k}2}.
\end{equation}

\begin{proof}
Estimate \eqref{cf110} follows by combining together \eqref{cf106} and \eqref{cf103}.

\end{proof}

\end{Lemma}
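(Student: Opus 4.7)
The plan is to combine the two preceding estimates directly: the pointwise control of $|q^u_\e(t)|^k$ in \eqref{cf103} (which reduces everything to bounding $\int_0^t |H_\e(s)|^k\,ds$) together with the uniform moment bound \eqref{cf106} on $\E|H_\e(s)|^k$. Since the right-hand side of \eqref{cf103} is nondecreasing in $t$, taking $\sup_{t\in[0,T]}$ and then expectation, and applying Fubini's theorem, one obtains
\[
\E\sup_{t\in[0,T]}|q^u_\e(t)|^k \leq c_{k,\gamma}(T)(|q|^k+|p|^k+1) + c_{k,\gamma}(T)\e^{-2k}\int_0^T \E|H_\e(s)|^k\,ds.
\]

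It then remains to estimate the integral. Plugging in \eqref{cf106}, the integrand splits into two pieces. The first piece, coming from the $c_{k,\gamma}(T)(|q|^k+|p|^k+1)\e^{3k/2}$ term, contributes at most $T\,c_{k,\gamma}(T)(|q|^k+|p|^k+1)\e^{3k/2}$. For the second piece, coming from $c_k\,\e^{k/2} s^{k/2} e^{-k\a_0 s/\e^2}$, the change of variables $s=\e^2 r$ gives
\[
\int_0^T s^{k/2} e^{-k\a_0 s/\e^2}\,ds \leq \e^{k+2}\int_0^{+\infty} r^{k/2} e^{-k\a_0 r}\,dr = c_k\,\e^{k+2},
\]
the integral being a convergent gamma-type integral thanks to the exponential decay ensured by \eqref{cf11}. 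Multiplying by the prefactor $c_{k,\gamma}(T)\e^{-2k}$ and collecting powers of $\e$ yields exactly the two contributions appearing on the right-hand side of \eqref{cf110}.

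There is no new conceptual difficulty here beyond the preparation already performed for \eqref{cf103} and \eqref{cf106}; the only care needed is bookkeeping of powers of $\e$ and verifying integrability on the natural scale $s\sim \e^2$, which is what converts the fast decay $e^{-k\a_0 s/\e^2}$ into the algebraic correction of order $\e^{k+2}$. The final bound deteriorates like a negative power of $\e$ because the initial velocity $p/\e$ forces $|q^u_\e|$ to accumulate contributions over the short relaxation time during which the overdamping has not yet effectively acted.
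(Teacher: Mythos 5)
Your proof is correct and implements exactly the combination of \eqref{cf103} and \eqref{cf106} (take the supremum, use Fubini, integrate the two pieces of the moment bound on $H_\e$) that the paper's one-line proof is alluding to. One small bookkeeping slip: the change of variables $s=\e^2 r$ that you use produces the \emph{sharper} exponent $\e^{2-k/2}$ for the second contribution, not the $\e^{2-3k/2}$ appearing in \eqref{cf110} (that exponent comes from the cruder estimate $s^{k/2}\le T^{k/2}$ combined with $\int_0^\infty e^{-k\a_0 s/\e^2}\,ds=\e^2/(k\a_0)$), so ``yields exactly the two contributions'' should read ``dominates the stated bound'': since $2-k/2>2-3k/2$ and $\e\in(0,1)$ gives $\e^{2-k/2}\le\e^{2-3k/2}$, your estimate a fortiori implies \eqref{cf110}.
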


Now, we are ready to prove \eqref{cf75}, that, in view of  Theorem  \ref{teo-bd}, implies Theorem \ref{teo1}.
\begin{Theorem}
Let $\{u_\e\}_{\e>0}$ be a family of predictable processes in $\mathcal{S}^\gamma_T$ that converge $\mathbb{P}$-a.s., as $\e\downarrow 0$, to some $u \in\, \mathcal{S}^\gamma_T$, with respect to the weak topology of $L^2(0,T;\reals^d)$. Then,   we have
\begin{equation}
\label{cf5}
\lim_{\e\to 0}\,\E \sup_{t \in\,[0,T]}|q^{u_\e}_\e(t)-g^u(t)|=0.
\end{equation}
\end{Theorem}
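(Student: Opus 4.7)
The strategy is to rewrite $q^{u_\e}_\e$ as a first-order integral equation that matches the one solved by $g^u$ up to an error of order $\sqrt{\e}$. Dividing \eqref{eq2} by $\a(q^{u_\e}_\e)$, integrating over $[0,t]$, and integrating by parts the $\ddot{q}^{u_\e}_\e$ term yields
\[
q^{u_\e}_\e(t) = q + \int_0^t \frac{b(q^{u_\e}_\e(s))}{\a(q^{u_\e}_\e(s))}\,ds + \int_0^t \frac{\si(q^{u_\e}_\e(s))u_\e(s)}{\a(q^{u_\e}_\e(s))}\,ds + R_\e(t),
\]
where
\[
R_\e(t)=\frac{\e\,p}{\a(q)}-\frac{\e^2 p^{u_\e}_\e(t)}{\a(q^{u_\e}_\e(t))} -\e^2\int_0^t\frac{\a^\prime(q^{u_\e}_\e(s))|p^{u_\e}_\e(s)|^2}{\a^2(q^{u_\e}_\e(s))}\,ds+\sqrt{\e}\int_0^t\frac{\si(q^{u_\e}_\e(s))}{\a(q^{u_\e}_\e(s))}\,dw(s).
\]
The first task is to show $\bar{\E}\sup_{t\leq T}|R_\e(t)|\to 0$. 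The stochastic integral is $O(\sqrt{\e})$ by the Burkholder-Davis-Gundy inequality, since $\si/\a$ is bounded. For the final boundary term, estimate \eqref{cf104} combined with $\bar{\E}\sup_t|H_\e|\leq c\sqrt{\e}$ from \eqref{cf121} and a Cauchy-Schwarz control on $\int_0^t e^{-\a_0(t-s)/\e^2}|u_\e(s)|\,ds$ using $u_\e\in \mathcal{S}^\gamma_T$ gives $\bar{\E}\sup_t\e^2|p^{u_\e}_\e(t)|\lesssim \sqrt{\e}$. For the quadratic correction, \eqref{cf122} reduces $\e^2\int_0^T|p^{u_\e}_\e|^2\,ds$ to $c\e^2+c\e^{-2}\int_0^T|H_\e|^2\,ds$; taking expectation and invoking \eqref{cf106} with $k=2$ (which gives $\int_0^T\bar{\E}|H_\e|^2\,ds=O(\e^3)$) shows this is $O(\e)$.

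Next, subtract the integral equation \eqref{eq3bis} for $g^{\bar u}$ and decompose
\[
\bar q^{\,\bar u_\e}_\e(t)-g^{\bar u}(t)=\int_0^t\le[\frac{b(\bar q^{\,\bar u_\e}_\e)}{\a(\bar q^{\,\bar u_\e}_\e)}-\frac{b(g^{\bar u})}{\a(g^{\bar u})}\r]ds+\int_0^t\le[\frac{\si(\bar q^{\,\bar u_\e}_\e)}{\a(\bar q^{\,\bar u_\e}_\e)}-\frac{\si(g^{\bar u})}{\a(g^{\bar u})}\r]\bar u_\e\,ds+\rho_\e(t)+R_\e(t),
\]
with $\rho_\e(t)=\int_0^t \si(g^{\bar u}(s))/\a(g^{\bar u}(s))\,(\bar u_\e(s)-\bar u(s))\,ds$. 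Under Hypothesis \ref{H1}, both $b/\a$ and $\si/\a$ are globally Lipschitz and bounded, so the first two terms are dominated by $c\int_0^t(1+|\bar u_\e(s)|)|\bar q^{\,\bar u_\e}_\e(s)-g^{\bar u}(s)|\,ds$. The term $\rho_\e$ is where the hypothesis on the controls enters: the weak convergence $\bar u_\e\rightharpoonup \bar u$ in $L^2(0,T;\reals^r)$, $\bar{\Pro}$-a.s., implies pointwise convergence $\rho_\e(t)\to 0$ for each $t$, and the family $\{\rho_\e\}$ is Hölder-$1/2$ equicontinuous with constant $2\|\si/\a\|_\infty\sqrt{\gamma}$; Arzelà-Ascoli then gives $\sup_t|\rho_\e(t)|\to 0$ almost surely, and dominated convergence (using the uniform bound $\|\rho_\e\|_\infty\leq 2\|\si/\a\|_\infty\sqrt{T\gamma}$) yields $\bar{\E}\sup_t|\rho_\e|\to 0$.

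Finally, the generalized Gronwall inequality with the $L^1(0,T)$-integrable weight $c(1+|\bar u_\e(s)|)$, whose norm is bounded by $cT+c\sqrt{T\gamma}$ uniformly in $\e$, produces
\[
\bar{\E}\sup_{t\leq T}|\bar q^{\,\bar u_\e}_\e(t)-g^{\bar u}(t)|\,\leq\, e^{c(T+\sqrt{T\gamma})}\,\bar{\E}\sup_{t\leq T}\!\bigl(|R_\e(t)|+|\rho_\e(t)|\bigr)\,\longrightarrow\,0,
\]
which is \eqref{cf5}. The main obstacle is the quadratic integral $\e^2\int_0^T|p^{u_\e}_\e|^2\,ds$ in $R_\e$: since $p^{u_\e}_\e$ is a priori of order $1/\e$, this term is not obviously small, and the gain of an extra factor $\e$ relies crucially on the pathwise integration-by-parts representation \eqref{cf300} of $H_\e$ and on \eqref{cf106}. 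This is precisely the place where the non-constant friction $\a$ prevents the use of Itô's isometry and forces the pathwise analysis carried out in the preparatory lemmas.
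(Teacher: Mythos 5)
Your proof is correct, but the decomposition of the error term $R_\e$ is genuinely different from the paper's and, in my view, cleaner. The paper starts from the explicit variation-of-constants representation \eqref{cf82} (a double time integral involving the kernel $e^{-A_\e(s,r)}$) and performs integration by parts on the inner integrals, producing a remainder written as a sum of six terms $I^1_\e,\dots,I^6_\e$, each of which is estimated separately using \eqref{cf106}, \eqref{cf110}, \eqref{cf121} and \eqref{cf122}. You instead divide the second-order equation by $\a(q^{u_\e}_\e)$ and integrate the single term $\e^2\ddot q^{\,u_\e}_\e/\a(q^{u_\e}_\e)$ by parts once (as a stochastic integral against the bounded-variation factor $1/\a(q^{u_\e}_\e(\cdot))$, with no quadratic covariation contribution since $q^{u_\e}_\e$ is $C^1$), obtaining a compact four-term remainder consisting of two boundary terms, the quadratic correction $\e^2\int_0^t\langle\nabla\a,p^{u_\e}_\e\rangle p^{u_\e}_\e/\a^2\,ds$, and the Itô integral $\sqrt\e\int_0^t\si/\a\,dw$. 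Both routes ultimately lean on the same preparatory machinery: the crucial input, as you correctly identify, is the pathwise representation \eqref{cf300} of $H_\e$ and the moment bound \eqref{cf106}, which deliver the extra factor of $\e$ needed to kill $\e^2\int_0^T|p^{u_\e}_\e|^2\,ds$ (a priori only $O(1)$, since $p^{u_\e}_\e\sim \e^{-1}$). Your handling of the $\rho_\e$ term (pointwise convergence from weak $L^2$-convergence, Hölder-$1/2$ equicontinuity from the $\mathcal{S}^\gamma_T$ bound, Arzelà--Ascoli, then dominated convergence) matches the paper's. One cosmetic note: $\a'(q)|p|^2$ in your $R_\e$ should be read as the vector $\langle\nabla\a(q^{u_\e}_\e),p^{u_\e}_\e\rangle\,p^{u_\e}_\e$; the magnitude estimate is unaffected. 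Your version has the additional virtue that the cancellation producing the limiting drift $b/\a$ and control $\si u/\a$ is structural (it happens before any integration by parts), whereas in the paper it emerges only after manipulating the double integrals; in particular, all $u_\e$-dependent remainder contributions in your approach are swept into the compact boundary term $\e^2 p^{u_\e}_\e(t)/\a(q^{u_\e}_\e(t))$ rather than appearing as separate integrals.
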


\begin{proof}
Integrating by parts in \eqref{cf82}, we obtain
\[q^{u_\e}_\e(t)=q+\int_0^t\frac{b(q^{u_\e}_\e(s))}{\a(q^{u_\e}_\e(s))}\,ds+\int_0^t\frac{\si(q^{u_\e}_\e(s))}{\a(q^{u_\e}_\e(s))}u_\e(s)\,ds+R_{\e}(t),\]
where 
\[\begin{array}{l}
\ds{R_{\e}(t)=\frac p\e\int_0^te^{-A_{\e}(s)}\,ds-\frac 1{\a(q^{u_\e}_\e(t))}\int_0^t e^{-A_{\e}(t,s)}b(q^{u_\e}_\e(s))\,ds+\sqrt{\e}\int_0^t\frac{\si(q^{u_\e}_\e(s))}{\a(q^{u_\e}_\e(s))}\,dw(s)}\\
\vs
\ds{+\int_0^t\le(\int_0^s e^{-A_{\e}(s,r)}b(q^{u_\e}_\e(r))\,dr\r)\frac 1{\a^2(q^{u_\e}_\e(s))}\le<\nabla \a(q^{u_\e}_\e(s)),p^{u_\e}_\e(s)\r>\,ds}\\
\vs
\ds{-\frac{1}{\a(q^{u_\e}_\e(t))}H_{\e}(t)+\int_0^t\frac 1{\a^2(q^{u_\e}_\e(s))}H_{\e}(s)\le<\nabla \a(q^{u_\e}_\e(s)),p^{u_\e}_\e(s)\r>\,ds=:\sum_{k=1}^6I^k_{\e}(t).}
\end{array}\]
This implies that
\begin{equation}
\label{cf1}
\begin{array}{l}
\ds{q^{u_\e}_\e(t)-g^u(t)=\int_0^t\le[\frac{b(q^{u_\e}_\e(s))}{\a(q^{u_\e}_\e(s))}-\frac{b(g^{u}(s))}{\a(g^{u}(s))}\r]\,ds+\int_0^t\le[\frac{\si(q^{u_\e}_\e(s))}{\a(q^{u_\e}_\e(s))}-\frac{\si(g^{u}(s))}{\a(g^{u}(s))}\r]u_\e(s)\,ds}\\
\vs
\ds{
+\int_0^t\frac{\si(g^{u}(s))}{\a(g^{u}(s))}\le[u_\e(s)-u(s)\r]\,ds+R_{\e}(t).}
\end{array}\end{equation}
Due to the Lipschitz-continuity and the boundedness of the functions  $\si$ and $1/\a$, we have that  $\si/\a$ is bounded and Lipschitz continuous. Then,  as $u_\e \in\,{\mathcal S}^\gamma_T$, we obtain
\[\begin{array}{l}
\ds{|q^{u_\e}_\e(t)-g^u(t)|^2}\\
\vs
\ds{\leq c\le|\int_0^t\frac{\si(g^{u}(s))}{\a(g^{u}(s))}\le[u_\e(s)-u(s)\r]\,ds\r|^2+c\,|R_{\e}(t)|^2+c(T)\int_0^t|q^{u_\e}_\e(s)-g^u(s)|^2\,ds}\\
\vs
\ds{+c(T)\int_0^t|q^{u_\e}_\e(s)-g^u(s)|^2\,ds\le(\int_0^t |u_\e(s)|^2\,ds+\sup_{s \in\,[0,t]}|g^u(s)|^2\r)}\\
\vs
\ds{\leq c\le|\int_0^t\frac{\si(g^{u}(s))}{\a(g^{u}(s))}\le[u_\e(s)-u(s)\r]\,ds\r|^2+c\,|R_{\e}(t)|^2+c_\gamma(T)\int_0^t|q^{u_\e}_\e(s)-g^u(s)|^2\,ds.}
\end{array}\]
By the Gronwall lemma, this allows to conclude that
\begin{equation}
\label{cf30}
\begin{array}{l}
\ds{\sup_{t \in\,[0,T]}|q^{u_\e}_\e(t)-g^u(t)|}\\
\vs
\ds{\leq c_\gamma(T)\sup_{t \in\,[0,T]}\le|\int_0^t\frac{\si(g^{u}(s))}{\a(g^{u}(s))}\le[u_\e(s)-u(s)\r]\,ds\r|+c_\gamma(T)\,\sup_{t \in\,[0,T]}|R_{\e}(t)|.}
\end{array}\end{equation}

Now, for any $\e>0$, we define
\[\Gamma_\e(t)=\int_0^t\frac{\si(g^{u}(s))}{\a(g^{u}(s))}\le[u_\e(s)-u(s)\r]\,ds.\]
For any $0<s<t$ we have
\[\Gamma_\e(t)-\Gamma_\e(s)=\int_s^t\frac{\si(g^{u}(r))}{\a(g^{u}(r))}\le[u_\e(r)-u(r)\r]\,dr,\]
so that, as $u_\e$ and $u$ are both in $S^\gamma_T$, 
\[|\Gamma_\e(t)-\Gamma_\e(s)|\leq c_\gamma \sqrt{t-s},\ \ \ \e>0.\]
As $\Gamma_\e(0)=0$, this implies that the family of continuous functions is $\{\Gamma_\e\}_{\e>0}$ is equibounded and equicontinuous, so that, by the Ascoli-Arzel\`a theorem, there exists $\e_n\downarrow 0$ and $v \in\,C([0,T];\reals^d)$ such that
\[\lim_{n\to 0}\sup_{t \in\,[0,T]}|\Gamma_{\e_n}(t)-v(t)|=0,\ \ \ \ \mathbb{P}-\text{a.s.}\]

On the other hand, as \eqref{cf70} holds, for any $h \in\,\reals^d$ we have
\[\lim_{\e\to 0}\le<\Gamma_{\e}(t),h\r>=\lim_{\e\to 0}\le<u_\e-u,\frac{\si(g^{u}(\cdot))}{\a(g^{u}(\cdot))}h\r>_{L^2(0,T;\reals^d)}=0,\]
so that we can conclude that $v=0$ and 
\[\lim_{\e\to 0}\,\E\sup_{t\in\,[0,T]}|\Gamma_\e(t)|=0.\]

Thanks to \eqref{cf30}, this implies that 
\[\limsup_{\e\to 0}\E\sup_{t \in\,[0,T]}|q^{u_\e}_\e(t)-g^u(t)|\leq c\,\limsup_{\e\,\to 0}\,\E\sup_{t \in\,[0,T]}|R_{\e}(t)|,\]
so that \eqref{cf5} follows if we show that
\begin{equation}
\label{cf10}
\lim_{\e\to 0}\,\E\sup_{t \in\,[0,T]}|R_{\e}(t)|=0.
\end{equation}

We have 
\begin{equation}
\label{cf21}
\begin{array}{l}
\ds{|I^1_{\e}(t)|=\frac {|p|}\e\le|\int_0^te^{-A_{\e}(s)}\,ds\r|\leq c\,|p|\,\e^{-1}\int_0^te^{-\frac{\a_0 s}{\e^2}}\,ds\leq 
c\,|p|\,\e.}
\end{array}\end{equation}

Moreover
\[\begin{array}{l}
\ds{|I^2_{\e}(t)|=\frac 1{|\a(q^{u_\e}_\e(t))|}\le|\int_0^t e^{-A_{\e}(t,s)}b(q^{u_\e}_\e(s))\,ds\r|}\\
\vs
\ds{\leq c\,\int_0^t e^{-\frac{\a_0(t-s)}{\e^2}}(1+|q^{u_\e}_\e(s)|)\,ds\leq c\,\e^2\le(1+\sup_{t \in\,[0,T]}|q^{u_\e}_\e(t)|\r).}
\end{array}\]
Thanks to \eqref{cf110}, this  implies 
\begin{equation}
\label{cf22}
\E\sup_{t \in\,[0,T]}|I^2_{\e}(t)|\leq c_{\gamma}(T)(|p|+|q|+1)\e^{\frac 32},\ \ \ \ \e \in\,(0,1].
\end{equation}

Next
\begin{equation}
\label{cf23}
\E\sup_{t  \in\,[0,T]}|I^3_{\e}(t)|=\sqrt{\e}\,\E\sup_{t \in\,[0,T]}\le|\int_0^t\frac{\si(q^{u_\e}_\e(s))}{\a(q^{u_\e}_\e(s))}\,dw(s)\r|\leq 
c(T)\,\sqrt{\e}.\end{equation}

Concerning $I^4(t)$, we have
\[\begin{array}{l}
\ds{|I^4_{\e}(t)|=\le|\int_0^t\le(\int_0^s e^{-A_{\e}(s,r)}b(q^{u_\e}_\e(r))\,dr\r)\frac 1{\a^2(q^{u_\e}_\e(s))}\le<\nabla \a(q^{u_\e}_\e(s)),p^{u_\e}_\e(s)\r>\,ds\r|}\\
\vs
\ds{\leq \e^2\,c\,\le(1+\sup_{t \in\,[0,T]}|q^{u_\e}_\e(t)|\r)\int_0^t|p^{u_\e}_\e(s)|\,ds,}
\end{array}\]
so that, due to \eqref{cf110} we obtain
\[
\begin{array}{l}
\ds{\E\sup_{t  \in\,[0,T]}|I^4_{\e}(t)|\leq \e^{2}\,c_{\gamma}(T)(|q|+|p|+1)\,\e^{-\frac 12}\le(\E\int_0^t |p^{u_\e}_\e(s)|^2\,ds\r)^{\frac 12}.}\end{array}
\]
As a consequence of \eqref{cf106} and  \eqref{cf122}, this yields 
\begin{equation}
\label{cf120}
\E\sup_{t  \in\,[0,T]}|I^4_{\e}(t)|\leq \e\,c_{\gamma}(T)(|q|^2+|p|^2+1),\ \ \ \ \e \in\,(0,\e_0].
\end{equation}

Concerning $I^5_\e(t)$,  according to \eqref{cf121} we have
\begin{equation}
\label{cf25}
\E \sup_{t  \in\,[0,T]}|I^5_{\e}(t)|\leq c\,\E \sup_{t  \in\,[0,T]}|H_\e(t)|\leq \sqrt{\e}\,c_\gamma(T)(1+|q|+|p|).
\end{equation}
Finally, it remains to estimate $I^6_{\e}(t)$.
We have
\[\begin{array}{l}
\ds{|I^6_{\e}(t)|=\le|\int_0^t\frac 1{\a^2(q^{u_\e}_\e(s))}H_{\e}(s)\le<\nabla \a(q^{u_\e}_\e(s)),p^{u_\e}_\e(s)\r>\,ds\r|\leq c\int_0^t|H_{\e}(s)||p^{u_\e}_\e(s)|\,ds,}
\end{array}\]
so that
\[\E\sup_{t \in\,[0,T]}|I^6_{\e}(t)|\leq c\le(\int_0^T\E|H_\e(s)|^2\,ds\int_0^T\E|p^{u_\e}_\e(s)|^2\,ds\r)^{\frac 12}.\]
By using \eqref{cf122}, this gives
\[\E\sup_{t \in\,[0,T]}|I^6_{\e}(t)|\leq c_\gamma(T)(1+|q|+|p|)\le(\int_0^T\E|H_\e(s)|^2\,ds\r)^{\frac 12}+\frac 1{\e^2}\int_0^T\E|H_\e(s)|^2\,ds,\]
so that, from \eqref{cf106} we get
\[\E\sup_{t \in\,[0,T]}|I^6_{\e}(t)|\leq \e\,c_\gamma(T)(1+|q|+|p|),\ \ \ \e \in\,(0,\e_0].\]
This, together with \eqref{cf21}, \eqref{cf22}, \eqref{cf23}, \eqref{cf120} and \eqref{cf25}, implies \eqref{cf10} and 
\eqref{cf5} follows.

\end{proof}

\section{Some applications and remarks}
\label{sec4}

Let $G$ be a bounded domain in $\reals^d$, with a smooth boundary $\partial G$. We consider here the exit problem for the process $q^\e(t)$ defined as the solution of equation \eqref{eq1}. For every $\e>0$ we define
\[\tau^\e:=\min\{t\geq 0\,:\,q^\e(t) \notin G\},\ \ \ \ \ \tau_\e:=\min\{t\geq 0\,:\,q_\e(t) \notin G\},\]
where $q_\e(t)=q^\e(t/\e)$ is the solution of equation \eqref{eq2}. It is clear that 
\[\tau^\e=\frac 1\e \tau_\e,\ \ \ \ \ q^\e(\tau^\e)=q_\e(\tau_\e).\]

In what follows, we shall assume that the dynamical system
\begin{equation}
\label{cf40}
\dot{q}(t)=b(q(t)),\ \ \ \ t\geq 0,
\end{equation}
satisfies the following conditions.
\begin{Hypothesis}
\label{H2}
The point $O \in\,G$ is asymptotically stable for the dynamical system \eqref{cf40} and for any initial condition $q \in\,\reals^d$
\[\lim_{t\to \infty} q(t)=O.\]
Moreover, we have
\[\le<b(q),\nu(q)\r>>0,\ \ \ \ \ \ q \in\,\partial G,\]
where $\nu(q)$ is the inward normal vector at $q \in\,\partial G$.
\end{Hypothesis}

Now, we introduce the quasi-potential associated with the action functional $I$ defined in \eqref{cf41}
\[\begin{array}{l}
\ds{V(q)=\inf\,\le\{I(f),\ f \in\,C([0,T];\reals^d),\ f(0)=O,\ f(T)=q,\ T>0\r\}}\\
\vs
\ds{=\frac 12\inf\le\{\int_0^T\le|\a(f(s))\si^{-1}(f(s))\le(\dot{f}(s)-\frac{b(f(s))}{\a(f(s))}\r)\r|^2\,ds,\ f(0)=O,\ f(T)=q,\ T>0\r\}.}
\end{array}\]
It is easy to check that, under our assumptions on $\a(q)$, the quasi-potential $V$ coincides with
\begin{equation}
\label{cf400}
\frac 12\inf\le\{\int_0^T\le|\si^{-1}(f(s))\le(\dot{f}(s)-\a(f(s))b(f(s))\r)\r|^2\,ds,\ f(0)=O,\ f(T)=q,\ T>0\r\}.
\end{equation}

\begin{Theorem}
\label{teo3.1}
Under Hypotheses \ref{H1} and \ref{H2}, for each $q \in\,\{q \in\,G\,:\,V(q)\leq V_0\}$ and  $p \in\,\reals^d$, we have
\begin{equation}
\label{cf42}
\lim_{\e\to 0}\e\log\E_{(q,p)}\tau^\e=\lim_{\e\to 0}\e\log\E_{(q,p)}\tau_\e=V_0,\end{equation}
and
\begin{equation}
\label{cf43}
\lim_{\e\to 0}\e\log\tau^\e=\lim_{\e\to 0}\e\log \tau_\e=V_0,\ \ \ \ \text{in probability},\end{equation}
where  
\[V_0:=\min_{q \in\,\partial G}V(q).\]
Moreover, if the minimum of $V$ on $\partial G$ is achieved at a unique point $q^\star \in\,\partial G$, then
\begin{equation}
\label{cf44}
\lim_{\e\to 0}q^\e(\tau^\e)=\lim_{\e\to 0}q_\e(\tau_\e)=q^\star.
\end{equation}
\end{Theorem}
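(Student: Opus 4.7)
The plan is to reduce every assertion to the time-changed process $q_\e$ and then invoke the classical Freidlin--Wentzell exit problem analysis (see \cite{fw}, Chapter 4), using Theorem \ref{teo1} as the underlying large deviation principle.

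Since $q^\e(t)=q_\e(\e t)$, the identities $\tau^\e=\tau_\e/\e$ and $q^\e(\tau^\e)=q_\e(\tau_\e)$ hold, and because $\e\log\e\to 0$, each of \eqref{cf42}, \eqref{cf43} and \eqref{cf44} is equivalent to the corresponding statement for $\tau_\e$ and $q_\e(\tau_\e)$. Next, the abstract hypotheses required by the Freidlin--Wentzell scheme are all met: Theorem \ref{teo1} supplies the LDP for $\{q_\e\}$ in $C([0,T];\reals^d)$ with action functional $I$ and normalizing factor $\e$; the zero-action trajectories of $I$ solve $\dot g=b(g)/\a(g)$, which, since $\a>0$, shares with \eqref{cf40} the same orbits up to a strictly monotone reparametrization, so $O$ is still asymptotically stable and globally attracting and $\langle b/\a,\nu\rangle>0$ on $\partial G$; and $V$ is continuous on the compact set $\bar G$.

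Given these ingredients, the standard construction of \cite{fw} applies: one fixes $0<\rho_1<\rho_2$ small, introduces the spheres $\gamma_i=\{|q-O|=\rho_i\}$ and the genuinely Markov $2d$-dimensional process $(q_\e,p_\e)$, and samples it at the successive entry times into $\gamma_2$ following passages through $\gamma_1\cup\partial G$. Combining the LDP upper and lower bounds with the fact that any path exiting $G$ pays action at least arbitrarily close to $V_0$ while near-optimal paths realize this cost, one obtains $\lim_{\e\to 0}\e\log\tau_\e=\lim_{\e\to 0}\e\log\E_{(q,p)}\tau_\e=V_0$ by summing the geometric number of excursions needed to exit. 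For the exit-point statement, one uses that when the minimizer on $\partial G$ is unique, continuity of $V$ and compactness of $\partial G$ yield $\inf_{\partial G\setminus B_\delta(q^\star)}V>V_0$ for every $\delta>0$; the LDP lower bound along a near-optimal path terminating in $B_\delta(q^\star)$, together with this strict inequality, forces $q_\e(\tau_\e)\to q^\star$ in probability.

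The principal technical obstacle I foresee is the \emph{uniformity in the initial velocity} that the iterated Markov-chain argument demands: the nominal initial momentum $p/\e$ is singular as $\e\to 0$, and the velocity $p^{u_\e}_\e$ at the restart times is itself typically large. This is resolved by inspecting the estimates already obtained in Section \ref{sec:2}: the bounds \eqref{cf106}, \eqref{cf121}, \eqref{cf110} and \eqref{cf122} all depend polynomially on $|q|$ and $|p|$, and \eqref{cf122} shows that the typical size of $p^{u_\e}_\e$ at restart times is controlled by $O(\e^{-1/2})$. Since in the LDP logarithmic scale the factor $\e$ absorbs polynomial corrections in $1/\e$, these estimates provide the uniformity of the LDP for $q_\e$ over the natural range of initial data produced by the iteration, and the Freidlin--Wentzell scheme then goes through without further genuinely new ideas and delivers \eqref{cf42}--\eqref{cf44}.
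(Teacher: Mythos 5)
Your proposal takes the same route as the paper's proof, which itself only cites the Freidlin--Wentzell exit-problem machinery (Theorems 4.2.1, 4.4.1, 4.4.2 of \cite{fw}) together with the LDP of Theorem \ref{teo1} and the remark that the Markov process $z_\e=(q_\e,p_\e)$ concentrates near $(O,0)$, and then omits the details. Your version is actually slightly more careful: you explicitly flag the need for uniformity of the LDP over the random restart momenta (which the paper compresses into one sentence about $z_\e$ staying near $(O,0)$) and address it via the polynomial dependence on $(|q|,|p|)$ in the estimates of Section \ref{sec:2}, and you also spell out that $\dot g=b(g)/\a(g)$ shares its orbits with \eqref{cf40} so that Hypothesis \ref{H2} transfers to the limiting first-order dynamics.
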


\begin{proof}
First, note that $q_\e(t)$ is the first component of the $2d$-dimensional Markov process $z_\e(t)=(q_\e(t),p_\e(t))$. Because of the structure of the $p$-component of the drift   of this process and our assumptions on the vector field $b$, starting from $(q,p) \in\,\reals^{2d}$, the trajectory of $z_\e(t)$ spends most of the time in a small neighborhood of the point $q=O$ and $p=0$, with probability close to $1$, as $0<\e<<1$. From time to time, the process $z_\e(t)$ deviates from this point and, as proven in Theorem \ref{teo1},  the deviations of $q_\e(t)$ are governed by the large deviation principle with action functional $I$, defined in \eqref{cf41}. This allows to prove the validity of \eqref{cf42}, \eqref{cf43} and \eqref{cf44} in the same way as Theorems 4.41, 4.42 and 4.2.1 from \cite{fw} are proven. We omit the details.

\end{proof}

As an immediate consequence of \eqref{cf400} and \cite[Theorem 4.3.1]{fw}, we have the following result.
\begin{Theorem}
\label{teo3.2}
Assume $a(q):=\si(q)\si^\star(q)=I$ and 
$\a(q)b(q)=-\nabla U(q)+l(q)$, for any $q \in\,\reals^d$,
for some  smooth function $U:\reals^d\to \reals$  having a unique critical point (a minimum) at $O \in\,\reals^d$ and such that 
\[\le<\nabla U(q),l(q)\r>=0,\ \ \ \ q \in\,\reals^d.\]
Then
\[V(q)=2U(q),\ \ \ \ \ q \in\,\reals^d.\]
\end{Theorem}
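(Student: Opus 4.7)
The plan is to exploit the orthogonal decomposition $\alpha b = -\nabla U + l$, together with $\langle \nabla U, l\rangle \equiv 0$, to split the action in \eqref{cf400} into a non-negative integral plus an exact boundary term, and then to identify that boundary term with $2U(q)$.

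Since $\sigma(q)\sigma^\star(q) = I$, the matrix $\sigma(q)$ is an isometry and so $|\sigma^{-1}(f(s))\,v|^2 = |v|^2$ for every $v \in \reals^d$. Formula \eqref{cf400} therefore becomes
\[ V(q) = \frac{1}{2}\inf\left\{\int_0^T |\dot{f}(s) - \alpha(f(s))\,b(f(s))|^2\,ds \,:\, f(0)=O,\ f(T)=q,\ T>0\right\}. \]
Substituting $\alpha b = -\nabla U + l$ and expanding the square I obtain
\[ |\dot{f} + \nabla U(f) - l(f)|^2 = |\dot{f} - \nabla U(f) - l(f)|^2 + 4\langle \dot{f}, \nabla U(f)\rangle - 4\langle l(f), \nabla U(f)\rangle. \]
The orthogonality hypothesis kills the last term, and the middle one equals $4\frac{d}{ds}U(f(s))$, which integrates to $4(U(q) - U(O))$. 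Normalizing $U(O)=0$ (permitted since only $\nabla U$ enters the hypotheses), for every admissible $f$ this yields
\[ \frac{1}{2}\int_0^T |\dot{f} + \nabla U(f) - l(f)|^2\,ds = \frac{1}{2}\int_0^T |\dot{f} - \nabla U(f) - l(f)|^2\,ds + 2U(q), \]
so that the lower bound $V(q) \geq 2U(q)$ is immediate.

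For the matching upper bound I would produce a quasi-minimizing family from the time-reversed system $\dot{f} = \nabla U(f) + l(f)$, along which $\frac{d}{ds}U(f(s)) = |\nabla U(f(s))|^2 \geq 0$; so $U$ is non-decreasing and $O$ is a repelling equilibrium (consistently with $O$ being asymptotically stable for the forward drift $b/\alpha$). The classical Freidlin--Wentzell construction for gradient-plus-rotation systems, recorded as \cite[Theorem 4.3.1]{fw}, builds a trajectory that pays an arbitrarily small cost $\eta>0$ to leave a small neighborhood of $O$ in the repelling direction and then follows the reverse flow exactly to reach $q$ at some large time $T$; letting $\eta \to 0$ gives $V(q) \leq 2U(q)$. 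The only genuine delicacy, already absorbed into that reference, is that $\nabla U(O) + l(O) = 0$ forces the reverse flow to take infinite time to escape $O$, so one must quantify the cost of the initial displacement from $O$ and show it tends to $0$; once this is in place, combining the two bounds yields $V(q) = 2U(q)$.
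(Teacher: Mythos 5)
Your proposal is correct and follows essentially the same route as the paper: the paper simply combines the reformulation \eqref{cf400} (where the isometry $\si\si^\star=I$ makes the $\si^{-1}$ factor drop out) with a direct citation of \cite[Theorem~4.3.1]{fw}, and your complete-the-square decomposition of $|\dot f+\nabla U-l|^2$ using $\langle\nabla U,l\rangle=0$ is precisely the argument underlying that cited theorem. The only place you defer rather than argue is the upper bound via the reverse flow with the $\eta$-cost escape from $O$, which is the same step the paper delegates to Freidlin--Wentzell.
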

From Theorems \ref{teo3.1} and \ref{teo3.2}, it is possible to get a number of results concerning the asymptotic behavior, as $\e\downarrow 0$, of the solutions of the degenerate parabolic and the elliptic problems associated with the differential operator ${\cal L}^\e$ defined by
\[{\cal L}^\e u(q,p)=\frac 12\sum_{i,j=1}^d a_{i,j}(q)\frac{\partial^2 u}{\partial p_i\partial p_j}(q,p)+\le(b(q)-\frac 1\e \a(q)p\r)\cdot\nabla_p u(q,p)+p\cdot \nabla_q u(q,p).\]

\medskip

Assume now that the dynamical system \eqref{cf40} has several asymptotically stable attractors. Assume, for the sake of brevity, that all attractors are just stable equilibriums $O_1$, $O_2$,\ldots,$O_l$. Denote by ${\cal E}$ the set of separatrices separating the basins of these attractors, and assume the set ${\cal E}$ to have dimension strictly less than $d$. Moreover, let each trajectory $q(t)$, starting at $q_0 \in\,\reals^d\setminus {\cal E}$, be attracted to one of the stable equilibriums $O_i$, $i=1,\ldots,l$, as $t\to\infty$. Finally, assume that the projection of $b(q)$ on the radius connecting the origin in $\reals^d$ and the point $q \in\,\reals^d$ is directed to the origin and its length is bounded from below by some uniform constant $\theta>0$ (this condition provides the positive recurrence of the process $z_\e(t)=(q_\e(t),p_\e(t))$, $t\geq 0$).

In what follows, we shall denote
\[\begin{array}{l}
\ds{V(q_1,q_2)}\\
\vs
\ds{=\frac 12\inf\int_0^T\le|\a(f(s))\si^{-1}(f(s))\le(\dot{f}(s)-\frac{b(f(s))}{\a(f(s))}\r)\r|^2\,ds,\ f(0)=q_1,\ f(T)=q_2,\ T>0\big\}}
\end{array}\]
and
\[V_{ij}=V(O_i,O_j),\ \ \ \ i,j \in\,\{1,\ldots,l\}.\]
In a generic case, the behavior of the process $(q^\e(t),p^\e(t))$, on time intervals of order $\exp(\la\e^{-1})$, $\la>0$ and $0<\e<<1$, can be described by a hierarchy of cycles as in \cite{fw} and \cite{fr91}. The cycles are defined by the numbers $V_{ij}$. For (almost) each initial point  $q$ and a time scale $\la$, these numbers define also the metastable state $O_{i^\star}$, $i^\star=i^\star(q,\la)$, where $q^\e(t)$ spends most of the time during the time interval $[0,\exp(\la\e^{-1})]$. Slow changes of the field $b(q)$ and/or of the damping coefficient $\a(q)$ can lead to stochastic resonance (compare with \cite{fr00}).

\medskip

Consider next the reaction diffusion equation in $\reals^d$
\begin{equation}
\label{cf50}
\le\{\begin{array}{l}
\ds{\frac{\partial u}{\partial t}(t,q)={\cal L} u(t,q)+c(q,u(t,q))u(t,q),}\\
\vs
\ds{u(0,q)=g(q),\ \ \ \ q \in\,\reals^d,\ \ \ t>0.}
\end{array}\r.\end{equation}
Here ${\cal L}$ is a linear second order uniformly elliptic operator, with regular enough coefficients. Let $q(t)$ be the diffusion process in $\reals^d$ associated with the operator ${\cal L}$. The Feynman-Kac formula says that $u$ can be seen as the solution of the problem
\begin{equation}
\label{cf51}
u(t,q)=\E_q\,g(q(t))\exp\int_0^tc(q(s),u(t-s,q(s))\,ds.
\end{equation}

Reaction-diffusion equations describe the interaction between particle transport defined by $q(t)$ and reaction which consists of multiplication (if $c(q,u)>0$) and annihilation (if $c(q,u)<0$) of particles. 
In classical reaction-diffusion equations, the Langevin dynamics which describes a diffusion with inertia is replaced by its vanishing mass approximation. If the transport is described by the Langevin dynamics itself, equation \eqref{cf50} should be replaced by an equation in $\reals^{2d}$. Assuming that the drift is equal to zero ($b(q)=0$),  and the damping is of order $\e^{-1}$, as $\e\downarrow 0$, this equation has the form
\begin{equation}
\label{cf55}
\le\{\begin{array}{rl}
\ds{\frac{\partial u^\e}{\partial t}(t,q,p)=}  &  \ds{\frac 12\sum_{i,j=1}^d a_{i,j}(q)\frac{\partial^2 u^\e}{\partial p_i\partial p_j}(q,p)-\frac 1\e \a(q)p\cdot\nabla_p u^\e(q,p)+p\cdot \nabla_q u^\e(q,p) }\\
& \vs
& \ds{+c(q,u^\e(t,q,p))u^\e(t,q,p),\ \ \ \ t> 0,\ \ \ \ (q,p) \in\,\reals^{2d},}\\
&  \vs
\ds{u^\e(0,q,p)=}  & \ds{g(q)\geq 0,\ \ \ \ (q,p) \in\,\reals^{2d}.}
\end{array}\r.\end{equation}

Now, we define
\[R(t,q)=\sup\le\{\int_0^t c(f(s),0)\,ds-I_t(f)\, :\, f(0)=q,\ f(t) \in\,G_0\r\},\]
where 
\[I_t(f)=\frac 12\int_0^t\a^2(f(s))a^{-1}(f(s))\dot{f}(s)\cdot \dot{f}(s)\,ds,\]
and $G_0=\text{supp}\{g(q),\ q \in\,\reals^d\}$.

\begin{Definition}
\begin{enumerate}
\item We say that Condition (N) is satisfied if $R(t,x)$ can be characterized, for any $t>0$ and $x \in\,\Sigma_t=\{q \in\,\reals^d,\ R(t,q)=0\}$, as 
\[\begin{array}{l}
\ds{\sup\le\{\int_0^tc(f(s),0)\,ds-I_t(f),\ f(0)=q,\ f(t) \in\,G_0,\ R(t-s,f(s))\leq 0,\ 0\leq s\leq t\,\r\}.}
\end{array}\]
\item We say that the non-linear term $f(q,u)=c(q,u)u$ in equation \eqref{cf55} is of KPP (Kolmogorov-Petrovskii-Piskunov) type if $c(q,u)$ is Lipschitz-continuous, $c(q,0)\geq c(q,u)>0$, for any $0<u<1$, $c(q,1)=0$ and $c(q,u)<0$, for any $u>1$.
\end{enumerate}
\end{Definition}

\begin{Theorem}
Let the non-linear term in \eqref{cf55} be of KPP type. Assume that Condition (N) is satisfied and assume that the closure of $G_0=\text{{\em supp}}\{g(q),\ q \in\,\reals^d\}$ coincides with the closure of the interior of $G_0$. Then,
\begin{equation}
\label{cf61}
\lim_{\e\to 0} u^\e(t/\e,q,p)=0,\ \ \ \text{if}\ R(t,q)<0,
\end{equation}
and
\begin{equation}
\label{cf62}
\lim_{\e\to 0} u^\e(t/\e,q,p)=1,\ \ \ \text{if}\ R(t,q)>0,\end{equation}
so that equation $R(t,q)=0$ in $\reals^{2d}$ defines the interface separating the area where $u^\e$, the solution of \eqref{cf55}, is close to $1$ and to $0$, as $\e\downarrow 0$.
\end{Theorem}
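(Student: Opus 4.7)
The plan is to use the Freidlin wave front propagation strategy for KPP reaction--diffusion equations (cf.\ \cite{fw}, Chapter 7), adapted to the degenerate dynamics in phase space $\reals^{2d}$. The key ingredient making the adaptation possible is the large deviation principle for $\{q_\e\}$ provided by Theorem \ref{teo1}: it yields an LDP on $C([0,t];\reals^d)$ at speed $\e$ whose rate function, when $b\equiv 0$ as in \eqref{cf55}, reduces precisely to the $I_t$ used in the definition of $R(t,q)$. Setting $v^\e(t,q,p):=u^\e(t/\e,q,p)$, the time change $\tau=t/\e$ turns \eqref{cf55} into a semilinear parabolic equation whose generator is $\e^{-1}{\cal L}^\e$ and whose zero-order term is $\e^{-1}c$, so the Feynman--Kac formula (applied to the Markov process $(q_\e,p_\e)$ started at $(q,p/\e)$, which is exactly the process of Section \ref{sec:2}) gives
\[
v^\e(t,q,p)=\E_{(q,p/\e)}\le[g(q_\e(t))\exp\le(\frac 1\e\int_0^t c\big(q_\e(s),v^\e(t-s,q_\e(s),\e p_\e(s))\big)\,ds\r)\r].
\]

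For the upper bound, assume $R(t,q)<0$. The KPP hypothesis gives $c(q,u)\leq c(q,0)$ for $u\geq 0$, and a parabolic comparison argument bounds $v^\e$ in $[0,\max(1,\|g\|_\infty)]$, so
\[
v^\e(t,q,p)\leq\|g\|_\infty\,\E_{(q,p/\e)}\le[\mathbf 1_{\{q_\e(t)\in\overline{G_0}\}}\exp\frac 1\e\int_0^t c(q_\e(s),0)\,ds\r].
\]
Varadhan's lemma applied to the LDP of Theorem \ref{teo1} (the functional $f\mapsto\int_0^t c(f(s),0)\,ds$ is bounded and continuous on $C([0,t];\reals^d)$, and $\{f:f(t)\in\overline{G_0}\}$ is closed) yields $\limsup_{\e\to 0}\e\log v^\e(t,q,p)\leq R(t,q)<0$, which is \eqref{cf61}.

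For the lower bound, assume $R(t,q)>0$. The argument has two steps. First, a locally uniform version of the previous upper bound shows that $v^\e\to 0$ uniformly on compact subsets of the open set $\{R<0\}$. Second, Condition (N) permits restricting attention, in the variational problem defining $R(t,q)$, to paths $f$ satisfying $R(t-s,f(s))\leq 0$ for $s\in[0,t]$; along such paths $v^\e(t-s,f(s),\cdot)\to 0$, so asymptotically the exponent in the Feynman--Kac formula becomes $\e^{-1}\int_0^t c(q_\e(s),0)\,ds$. Combining the LDP lower bound of Theorem \ref{teo1} (where the assumption that $\overline{G_0}$ coincides with the closure of its interior allows one to apply the LDP to the open set $\{f(t)\in\operatorname{int}G_0\}$) with this substitution yields
\[
\liminf_{\e\to 0}\e\log v^\e(t,q,p)\geq R(t,q)>0.
\]
Since $v^\e\leq\max(1,\|g\|_\infty)$ is uniformly bounded, an exponential-scale lower bound of this strength forces $v^\e$ to be eventually bounded below by some $\delta>0$. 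A concluding comparison with the logistic ODE $\dot y=c_0\e^{-1}y(1-y)$, where $c_0>0$ is a uniform lower bound for $c(\cdot,u)$ on a neighborhood of $q$ and for $u\leq 1-\eta$, then saturates $v^\e$ to $1$ in time $O(\e)$, giving \eqref{cf62}.

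The heart of the argument, and the main obstacle, is this lower bound. Condition (N) is exactly what couples the a priori estimate $v^\e\to 0$ on $\{R<0\}$ to the variational problem by localizing the near-optimal paths in the region $\{R\leq 0\}$, where the self-referential $v^\e$ in the exponent can be safely replaced by $0$. A secondary technical point is that the LDP of Theorem \ref{teo1} must be applied with initial momentum $p/\e$ which blows up as $\e\downarrow 0$; this uniformity can be extracted from the estimates of Section \ref{sec:2}, where the initial momentum enters only through an exponentially damped boundary-layer term (see \eqref{cf104}, \eqref{cf300}), and the rate $I_t$ itself is momentum-independent.
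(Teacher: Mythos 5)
Your treatment of the upper bound \eqref{cf61} matches the paper: after the time change you use the Feynman--Kac representation, replace $c(q,u)$ by $c(q,0)$ via the KPP inequality, and apply the Laplace/Varadhan asymptotics coming from Theorem \ref{teo1}. That part is fine.

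The lower bound \eqref{cf62}, however, contains a genuine flaw. You invoke Condition (N) to restrict the variational problem to paths with $R(t-s,f(s))\le 0$ and then conclude $\liminf_{\e\to 0}\e\log v^\e(t,q,p)\ge R(t,q)>0$. Two things go wrong. First, Condition (N) is a statement only about points on $\Sigma_t=\{R(t,q)=0\}$; it says nothing about the variational problem for $R(t,q)>0$, where the near-optimal paths necessarily pass through the open region $\{R>0\}$ and the self-referential $v^\e$ in the exponent can \emph{not} be replaced by $0$. Second, and more decisively, the inequality you claim is impossible: by comparison, $0\le v^\e\le\max(1,\|g\|_\infty)$, so $\e\log v^\e$ is bounded above by $\e\log\max(1,\|g\|_\infty)\to 0$ and can never have a positive $\liminf$. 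The linearized Feynman--Kac expectation does blow up like $\exp(R(t,q)/\e)$ when $R(t,q)>0$, but the nonlinear $v^\e$ saturates; this is precisely the regime in which the linearization fails.

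What you need instead is the two-stage argument of the paper. First, prove the much weaker statement \eqref{cf63}: \emph{if} $R(t,q)=0$, then $u_\e(t,q,p)\ge\exp(-\d/\e)$ for every $\d>0$ and small $\e$. Here Condition (N) is used legitimately: for $q\in\Sigma_t$ the near-optimal paths stay in $\{R\le 0\}$, where \eqref{cf61} already gives $u_\e\to 0$, so along those paths $c(q_\e(s),u_\e(t-s,\cdot))\to c(q_\e(s),0)$ by continuity of $c$, and Varadhan's lower bound produces $\e\log u_\e\ge -\d$. Second, for $R(t,q)>0$, apply the strong Markov property of $(q_\e,p_\e)$: starting from $q$ the process hits the moving front $\{R(t-s,\cdot)=0\}$ at some time $\tau<t$ with probability close to $1$ (not exponentially small), at which moment \eqref{cf63} gives $u_\e(t-\tau,q_\e(\tau),p_\e(\tau))\ge e^{-\d/\e}$; then the KPP property $c(q,u)>0$ for $0<u<1$ (your logistic comparison) pushes $u_\e$ up to $1$ on a time scale $O(\e)$. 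Your logistic-ODE step is the right tool for the bootstrap, but it must be fed the $e^{-\d/\e}$ bound obtained via the strong Markov property at the front, not by the exponential lower bound you wrote, which cannot hold.
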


\begin{proof}
If we define
$u_\e(t,q,p)=u^\e(t/\e,q,p)$, the analog of \eqref{cf51} yields
\begin{equation}
\label{cf60}
u_\e(t,q,p)=\E_{(q,p)} g(q_\e(t))\exp\le(\frac 1\e\int_0^tc(q_\e(s),u_\e(t-s,q_\e(s),p_\e(s))\,ds\r),\end{equation}
where
$z_\e(t)=(q_\e(s),p_\e(s))$ is the solution to equation \eqref{eq2}. By taking into account our assumptions on $c(q,u)$, we derive from \eqref{cf60}
\[u_\e(t,q,p)\leq \E_{(q,p)} \,g(q_\e(t))\exp\le(\frac 1\e\int_0^tc(q_\e(s),0)\,ds\r).\]
Theorem \ref{teo1} and the Laplace formula imply that the right hand side of the above inequality is logarithmically equivalent , as $\e\downarrow 0$, to $\exp\le(\frac 1\e R(t,q)\r)$ and this implies \eqref{cf61}.

In order to prove \eqref{cf62}, first of all one should check that if $R(t,q)=0$, then for each $\d>0$ 
\begin{equation}
\label{cf63}
u_\e(t,q,p)\geq \exp\le(-\frac 1\e\d\r),
\end{equation}
when $\e>0$ is small enough. This follows from \eqref{cf60} and Condition (N), if one takes into account the continuity of $c(q,u)$. The strong Markov property of the process $(q_\e(t),p_\e(t))$ and bound \eqref{cf63} imply \eqref{cf62} (compare with \cite{fr85}).

\end{proof}

Consider, as an example, the case $c(q,0)=c=\text{const}$. Then
\[R(t,q)=c t-\inf\le\{ I_t(f),\ f(0)=q,\ f(t) \in\,G_0\r\}.\]
The infimum in the equality above coincides with
\[\frac 1{2t}\rho^2(q, G_0),\]
(see, for instance, \cite{fr85} for  a proof), where $\rho(q_1,q_2)$, $q_1, q_2 \in\,\reals^d$, is the distance in the Riemaniann metric
\[ds=\a(q)\sqrt{\sum_{i,j=1}^d a_{i,j}(q)dq_i\,dq_j}.\]
This implies that the interface moves according to the Huygens principle with the constant speed $\sqrt{2c}$, if calculated in the Riemannian metric $ds$.

If $\a(q)=0$ in a domain $G_1\subset \reals^d$, the points of $G_1$ should be identified. The Riemaniann metric in $\reals^d$ induces now, in a natural way, a new metric $\tilde{\rho}$ in this space with identified points. The motion of the interface, in this case, can be described by the Huygens principle with constant velocity $\sqrt{2c}$ in the metric $\tilde{\rho}$. 

If $c(q,0)$ is not constant, the motion of the interface, in general, cannot be described by a Huygens principle. Actually, the motion can have jumps and other specific features (compare with \cite{fr85}). 

Finally, if the Condition (N) is not satisfied, the function $R(t,q)$ should be replaced by another one. Define
\[\tilde{R}(t,q)=\sup\le\{\min_{0\leq a\leq t}\le(\int_0^a c(f(s),0)\,ds-I_a(f)\r)\, :\, f(0)=q,\ f(t) \in\,G_0\r\}.\]
The function $\tilde{R}(t,q)$ is Lipschitz continuous and non-positive and if Condition (N) is satisfied, then 
\[\tilde{R}(t,q)=\min\le\{R(t,q),0\r\}.\]
By proceeding as in \cite{fr91}, it is possible to prove that
\[\lim_{\e\to 0} u^\e(t/\e,q,p)=0,\ \ \ \text{if}\ R(t,q)<0,\] and
\[\lim_{\e\to 0} u^\e(t/\e,q,p)=1,\]
if $(t,q)$ is in the interior of the set $\{(t,q)\,:\,t>0,\ q \in\,\reals^d,\ \tilde{R}(t,q)=0\}$.

\bigskip

Finally, we would like to mention a few generalizations.

\begin{enumerate}

\item The arguments that we we have used in  the proof of Theorem \ref{teo1}, can be used to prove the same result for the equation 
\[\le\{\begin{array}{l}
\ds{\ddot{q}^{\,\e}(t)=b(q^\e(t))-\frac {\a(q^\e(t))}\e\dot{q}^{\,\e}(t)+\frac 1{\e^\beta}\,\si(q^\e(t))\dot{B}(t),}\\
\vs
\ds{q^\e(0)=q \in\,\reals^{d},\ \ \ \ \dot{q}^\e(0)=p \in\,\reals^{d},}
\end{array}
\r.
\]
for any $\beta<1/2$. As a matter of fact, with the very same method we can show that also in this case the family $\{q_\e\}_{\e>0}$ satisfies a large deviation principle in $C([0,T];\reals^d)$ with action functional $I$ and with normalizing factor $\e^{1-2\beta}$.

\item The damping can be assumed to be anisotropic. This means that the coefficient $\a(q)$ can be replaced by a matrix $\a(q)$, with all eigenvalues having negative real part.

\item Systems with strong {\em non-linear damping} can be considered. Namely, let $(q^\e,p^\e)$ be the time-inhomogeneous Markov process corresponding to the following initial-boundary value problem for a degenerate quasi-linear equation on a bounded regular domain $G\subset \reals^d$
\[\le\{\begin{array}{ll}
\ds{\frac{\partial u^\e(t,p,q)}{\partial t}=}  &  \ds{\frac 12\sum_{i,j=1}^da_{i,j}(q)\frac{\partial ^2 u^\e(t,q,p)}{\partial p_i \partial p_j}+b(q)\cdot \nabla_pu^\e(t,q,p)}\\
&  \vs
&  \ds{-\frac {\a(q,u^\e(t,q,p))}\e p\cdot \nabla p u^\e(t,q,p)+p\cdot \nabla_q u^\e(t,q,p).}\\
&  \vs
\ds{u^\e(0,q,p)=g(q),}  &  \ds{\ \ \ \ \ \ u^\e(t,q,p)_{|_{q \in\,\partial G}}=\psi(q),}
\end{array}\r.\]
Existence and uniqueness of such degenerate problem, under some mild conditions, follows from \cite[Chapter 5]{fred}. The non-linearity of the damping leads to some pecularities in the exit problem and in metastability. In particular, in the generic case, metastable distributions can be distributions among several asymptotic  attractors and the limiting exit distributions may have a density (see \cite{kf}).

\end{enumerate}


\begin{thebibliography}{99}
\bibitem{de} P.~Dupuis, R.~Ellis, {\sc A weak convergence approach to the theory of large deviations}, Wiley Series in Probability and Statistics, John Wiley and Sons, Inc., New York, 1997. 

\bibitem{db}  M.~Bou\'e, P.~Dupuis, {\em A variational representation for certain functionals of Brownian motion}, Annals of Probability 26 (1998), no. 4, 1641--1659.

\bibitem{cf} Z.~Chen, M.I.~Freidlin, {\em Smoluchowski-Kramers approximation and exit problems} Stochastics and Dynamycs 5 (2005), pp. 569--585.


\bibitem{fred} M.I.~Freidlin, {\sc Functional integration and partial differential equations}, Annals of Mathematics Studies, 109,  Princeton University Press, 1985. 

\bibitem{fr85} M.I.~Freidlin, {\em Limit  theorems for large deviations and reaction-diffusion equations},  Annals of  Probability 13 (1985), pp. 639--675.

\bibitem{fr91} M.I.~Freidlin, {\em Coupled reaction-diffusion equations}, Annals of Probability 19 (1991), pp. 29--57. 


\bibitem{fr00} M.I.~Freidlin, {\em Quasi-deterministic approximation, metastability and stochastic resonance}, Physica D 137 (2000), pp. 333--352.


\bibitem{friedlin} M.I.~Freidlin, {\em Some remarks on the Smoluchowski-Kramers approximation}, Journal of  Statistical Physics 117, pp.  617--634,  2004.


\bibitem{fh} M.I.~Freidlin, W.~Hu, {\em Smoluchowski-Kramers approximation in the case of variable friction}, Journal of Mathematical Sciences, 179 (2011), pp. 184--207.

\bibitem{kf} M.I.~Freidlin, L.~Koralov, {\em Nonlinear stochastic perturbations of dynamical systems and quasi-linear parabolic PDE's with a small parameter},  Probability Theory andRelated Fields 147 (2010), pp. 273--301.

\bibitem{fw} M.I.~Freidlin,  A.D.~Wentzell, {\sc Random perturbations of dynamical systems}, Third Edition, Springer, Heidelberg, 2012.

\bibitem{lyv} Y.~Lyv, A.J.~Roberts, {\em Large deviation principle for singularly perturbed stochastic damped wave equations}, Stochastic Analysis and Applications, 32 (2014), pp. 50-60.

\end{thebibliography}
\end{document}